\newtheorem{theorem}{Theorem}[section]
\newtheorem{lemma}[theorem]{Lemma}
\newtheorem{corollary}[theorem]{Corollary}
\newtheorem{remark}[theorem]{Remark}
\newtheorem{example}[theorem]{Example}
\newtheorem{conjecture}[theorem]{Conjecture}
\newtheorem{definition}[theorem]{Definition}
\newtheorem{discussion}[theorem]{Discussion}
\numberwithin{equation}{section}
\def\K{\dim_{\mathbb{K}}}
\def\N{\mathbb{N}}
\def\FK{\mathbb{K}}
\def\C{\mathbb{C}}
\def\M{\mathcal{M}}
\def\J{\mathcal{J}}
\def\1M{\M_G^{(1)}}
\def\Q{\widetilde Q}
\begin{document}

\baselineskip=15.5pt

\title[Standard monomials of $1$-skeleton ideals of multigraphs]{Standard monomials of $1$-skeleton ideals of multigraphs}

\author[A. Roy]{Amit Roy}

\address{NISER Bhubaneswar,
 Khurda, Pipli, Near Jatni, Odisha 752050, India.}

\email{amitiisermohali493@gmail.com}
\subjclass[2010]{05E40, 15B36}

\keywords{Standard monomials, signless Laplacian, parking functions.}

\date{}
 
\begin{abstract}
Given a graph $G$ on the vertex set $\{0,1,\ldots,n\}$ with the root vertex $0$, Postnikov and Shapiro associated a monomial ideal $\M_G$ in the polynomial ring $R=\FK[x_1,\ldots,x_n]$ over a field $\FK$ such that $\K(R/\M_G)=\det\widetilde L_G$, where $\widetilde L_G$ is the truncated Laplacian of $G$. Dochtermann introduced the $1$-skeleton ideal $\M_G^{(1)}$ of $\M_G$ which satisfies the property that $\K(R/\M_G^{(1)})\ge\det\widetilde Q_G$, where $\widetilde Q_G$ is the truncated signless Laplacian of $G$. In this paper we characterize all subgraphs of the multigraph $K_{n+1}^{a,1}$, in particular all simple graphs $G$, such that $\K(R/\M_G^{(1)})=\det\widetilde Q_G$. Moreover, we give examples of subgraphs $G$ of the complete multigraph $K_{n+1}^{a,b}$, in which the equality $\K(R/\M_G^{(1)})=\det\widetilde Q_G$ holds. We also provide a conjecture on the structure of a general multigraph satisfying the above-mentioned equality.

\end{abstract}
\maketitle

\section{Introduction}
\label{introduction}

Let $G$ be a multigraph on the vertex set $V=\{0,1,\ldots,n\}=\{0\}\cup[n]$ with root $0$ and the set of edges $E(i,j)=E(j,i)$ between $i,j\in V$. The adjacency matrix of $G$ is given by $A(G)=[a_{i,j}]_{0\le i,j\le n}$, where $a_{i,j}=a_{j,i}=|E(i,j)|$ for $i\neq j$, and $a_{i,i}=0$ for all vertex $i,j$ in $G$. Therefore, $G$ is assumed to be undirected and loopless. For any nonempty subset $S$ of $[n]=\{1,2,\ldots,n\}$, let $d_S(i)=\sum_{j\notin S}a_{i,j}$ for $i\in S$. Thus $d_S(i)$ is the number of edges incident to the vertex $i$ whose other end points are not in $S$. In particular, $d_i:=d_{\{i\}}(i)$ is the degree of the vertex $i$ in $G$. For a graph $G$, the Laplacian $L_G$ and the signless Laplacian $Q_G$ are defined as 
\[
 L_G=D(G)-A(G)\quad\text{and}\quad Q_G=D(G)+A(G),
\]
where $D(G)=\operatorname{diag}[d_0,d_1,\ldots,d_n]$ is the diagonal matrix of order $n+1$. Let $R=\FK[x_1,\ldots,x_n]$ be the polynomial ring in $n$ variables over a field $\FK$. Given a multigraph $G$ on the vertex set $V$, we consider the monomial ideal
$
 \M_G=\left\langle m_S=\prod_{i\in S}x_i^{d_S(i)}\mid\emptyset\neq S\subseteq[n]\right\rangle
$
in $R$, called the $G$-parking function ideal or the graphical parking function ideal \cite{GH}. More generally, for a directed graph $G$, Postnikov and Shapiro introduced the Artinian monomial ideal $\M_G$ and studied their combinatorial and homological properties in \cite{PS}. The standard monomials $\mathbf{x}^{\mathbf{a}}=\prod_{i=1}^nx_i^{a_i}$ of $R/\M_G$ (i.e., $\mathbf{x}^{\mathbf{a}}\notin\M_G$) correspond to the $G$-parking functions $\mathbf{a}=(a_1,\ldots,a_n)\in\N^n$, a natural generalization of the classical parking functions introduced by Konheim and Weiss \cite{KW}. Moreover, $\K(R/\M_G)=\det\widetilde L_G$, where $\widetilde L_G$ is the truncated Laplacian obtained from $L_G$ by deleting the row and column corresponding to the root vertex. Thus by the Matrix-Tree theorem \cite[Theorem 5.6.8]{RS}, the number of $G$-parking functions equals the number of spanning trees of $G$. For directed graphs Chebikin and Pylyavskyy \cite{ChPy}, for simple graphs Perkinson, Yang and Yu \cite{PYY}, and for multigraphs Gaydarov and Hopkins \cite{GH} have provided an explicit bijection between the set of spanning trees of $G$ and the set of $G$-parking functions. The ideals $\M_G$ have connections to `chip firing' \cite{BaSh} and a discrete Riemann-Roch theory for graphs \cite{BaNo}. For instance, Baker and Norine reinterpreted the standard monomials of $\M_G$ as `$q$-reduced divisors' in \cite{BaNo}, where they proved a Riemann-Roch theorem for graphs analogous to the classical statement for Riemann surfaces. Motivated by certain constructions in `hereditary chip firing' models, Dochtermann \cite{Do} introduced the notion of $k$-skeleton ideals $\M_G^{(k)}$ for a simple graph $G$. However, as noted in \cite{Do} the definitions are easily extended for a multigraph.

For $0\le k\le n-1$, the ideals 
\[
 \M_G^{(k)}:=\left\langle m_S=\prod_{i\in S}x_i^{d_S(i)}\mid\emptyset\neq S\subseteq[n]\quad\text{and}\quad |S|\le k+1\right\rangle
\]
are by definition subideals of $\M_G$, where $G$ is a multigraph on the vertex set $\{0\}\cup[n]$. Note that for $k=n-1$, $\M_G^{(n-1)}=\M_G$. For a complete simple graph $G$ and for all positive integer $k$, the Betti numbers of $\M_G^{(k)}$ are determined in \cite{KLS}. Moreover, the standard monomials of $R/\M_G^{(k)}$ for such a $G$ are identified with certain kind of vector-parking functions in \cite{DoKi}. 

Analogous to the ideal $\M_G$, the number of standard monomials of $R/\1M$ has a determinantal interpretation. Let $a,b\ge 1$ be integers and $G$ be a multigraph with adjacency matrix $A(G)=[a_{i,j}]_{0\le i,j\le n}$, where $a_{0,i}=a$ and $a_{i,j}=b$ for $i,j\in[n]$; $i\neq j$. The graph $G$ described above is called a complete multigraph and is denoted by $K_{n+1}^{a,b}$. Dochtermann showed that for the complete simple graph $G=K_{n+1}:=K_{n+1}^{1,1}$, the equality $\K(R/\1M)=\det\Q_G$ holds, where $\Q_G$ is the truncated signless Laplacian of $G$ obtained from $Q_G$ by deleting the row and column corresponding to the root $0$. Moreover, based on some initial computations, he asked whether it is true that for any simple graph $G$, $\K(R/\1M)\ge\det\Q_G$. This question has been answered affirmatively in \cite{KLR}. In fact, it is shown that for any multigraph $G$, we have  $\K(R/\1M)\ge\det\Q_G$; thus providing a lower bound for the number of standard monomials of $R/\1M$. As mentioned earlier, the graph $K_{n+1}$ attains this lower bound. Under certain conditions a characterization of the equality $\K(R/\1M)=\det\Q_G$ for simple graphs $G$ is obtained in \cite{KLR}.

In this article, we consider the question of classifying all multigraphs $G$ which satisfy the property that $\K(R/\1M)=\det\Q_G$. For this, we first define a {\it $d$-fold product} $G_1*_dG_2$ between two graphs $G_1$ and $G_2$, where $d\ge 1$ and  $|V(G_1*G_2)|=|V(G_1)|+|V(G_2)|-1$ (see Definition \ref{product graph}). In Section \ref{section 2}, we prove the following result which gives a new family of multigraph $G$ that satisfies the equality $\K(R/\1M)=\det\Q_G$.

  (i) (Corollary \ref{new graphs})
   Let $G_i$ be a multigraph on the vertex set $\{0,1,\ldots,n_i\}$ obtained from a complete multigraph $K_{n_i+1}^{a_i,b_i}$ by removing some edges through the root $0$, where $1\leq i\leq r$. Suppose $n=\sum_{i=1}^rn_i$. Let $G=((\cdots(G_1*_{d_1}G_2)*_{d_2}\cdots *_{d_{r-2}} G_{r-1})*_{d_{r-1}}G_r)$ be the multigraph on the vertex set $\{0,1,\ldots,n\}$, where $d_1\geq d_2\geq\dots\geq d_{r-1}$ are nonnegative integers, $b_1\geq d_1$, and $b_i\geq d_{i-1}$ for $2\le i\le r$.  Then $\dim_{\FK}(R/\M_G^{(1)})=\det\widetilde Q_G$.
   
   In Section \ref{characterizing subgraphs} we define the {\it maximally connected subgraphs} of a graph $G$ and prove the following classification result for subgraphs of the multigraph $K_{n+1}^{a,1}$.
   
   (ii) (Theorem \ref{for simple graph having multiple rooted edges}) Let $G$ be a subgraph of the complete multigraph $K_{n+1}^{a,1}$ on the vertex set $\{0,1,\ldots ,n\}$. The graph $G$ satisfies $\K(R/\M_G^{(1)})=\det\widetilde Q_G$ if and only if each maximally connected subgraph $G_i$ of $G$ with $|V(G_i)|=n_i$, is obtained from a complete multigraph $K_{n_i+1}^{a_i,1}$ by deleting some edges through the root $0$.
   
   As an immediate corollary we obtain the following classification result for simple graphs.
   
   (iii) (Corrolary \ref{simple graph result})
    Let $G$ be a simple graph  on $n+1$ vertices $\{0,1,\ldots,n\}$. Then  $\K(R/\M_G^{(1)})=\det\widetilde Q_G$ holds if and only if each maximally connected subgraph $G_i$ of $G$ with $|V(G_i)|=n_i$, is obtained from a complete simple graph $K_{n_i+1}$ by deleting some edges through the root $0$.
 
 Finally, based on some computations we conjecture in Section \ref{last section} that if $G$ is a multigraph satisfying the equality $\K(R/\1M)=\det\Q_G$, then after a renumbering of the vertices, $G$ is the graph described as in Corollary \ref{new graphs} (see Conjecture \ref{conjecture}).
    
\section{Standard monomials of a family of multigraphs}\label{section 2}

In this section we consider a monomial ideal defined by certain symmetric matrices. Using this we provide a new family of multigraphs $G$ satisfying the equality $\K(R/\1M)=\det\Q_G$.

 Let $n\ge 1$ and $M_n(\N)$ be the set of $n\times n$ matrices over nonnegative integers $\N$. Let 
 \[
 \mathcal{G}_n=\{H=[h_{i,j}]\in M_n(\N)\mid H^t=H\,\text{and}\, h_{ii}\ge\max_{j\neq i} h_{i,j}\,\text{for}\, 1\le i\le n.\}
 \] 
 For $H=[h_{i,j}]\in\mathcal{G}_n$ with $\alpha_i=h_{i,i}$, we associate a monomial ideal 
\[
 \mathcal{J}_H=\left\langle x_t^{\alpha_t},x_i^{\alpha_i-h_{i,j}}x_j^{\alpha_j-h_{i,j}}\mid 1\le t\le n,\,1\le i< j\le n\right\rangle
\]
in the polynomial ring $R=\FK[x_1,\ldots,x_n]$. Note that if we consider the matrix $H=\widetilde Q_G$, then $\J_H=\M_{G}^{(1)}$. The ideal $\J_H$ was introduced in \cite{KLR} where the following lower bound for the number of standard monomials was given in terms of the determinant of the matrix $H$.
\begin{theorem}\textup{\cite[Theorem 3.3]{KLR}}\label{inequality for multigraph}
 Let $H\in\mathcal{G}_n$ be positive semidefinite and let $\J_H$ be the monomial ideal in the polynomial ring $R$ associated to $H$. Then 
 \[
  \K\left(\frac{R}{\J_H}\right)\ge \det H.
 \]
\end{theorem}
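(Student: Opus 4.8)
The plan is to pass from the monomial ideal $\J_H$ to a lattice-point description of its standard monomials and then compare an inclusion–exclusion count of those points against the permutation expansion of $\det H$. A monomial $\mathbf{x}^{\mathbf a}$ lies outside $\J_H$ if and only if $0\le a_i\le\alpha_i-1$ for every $i$ and, for every pair $i<j$, one does \emph{not} have simultaneously $a_i\ge\alpha_i-h_{i,j}$ and $a_j\ge\alpha_j-h_{i,j}$. Thus $\K(R/\J_H)$ counts the integer points of the box $\prod_i[0,\alpha_i-1]$ avoiding, for each pair, the corner $B_{i,j}=\{a_i\ge\alpha_i-h_{i,j},\ a_j\ge\alpha_j-h_{i,j}\}$; here $\alpha_i-h_{i,j}\ge 0$ by the defining inequality of $\mathcal{G}_n$, so each corner genuinely sits inside the box and contains exactly $h_{i,j}^2$ points in the relevant coordinates. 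Running inclusion–exclusion over the corners, indexed by a simple graph $T$ on $[n]$, the intersection $\bigcap_{\{i,j\}\in T}B_{i,j}$ factors coordinatewise, giving
\[
 \K\!\left(\frac{R}{\J_H}\right)=\sum_{T}(-1)^{|T|}\prod_{v\in V(T)}\Big(\min_{e\in T,\,e\ni v}h_e\Big)\prod_{v\notin V(T)}\alpha_v ,
\]
since a vertex $v$ covered by $T$ is forced into the $\min_{e\ni v}h_e$ topmost values of its range.

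Next I would expand $\det H$ by its permutation sum and, using symmetry, organize it by cycle type: a fixed point contributes $\alpha_v$, a transposition $(i\,j)$ contributes $-h_{i,j}^2$, and a cycle of length $k\ge 3$ together with its reverse contributes $(-1)^{k-1}\cdot 2\prod_{e}h_e$. The key observation is that the two expansions agree exactly on the part indexed by matchings: a matching $M$ yields the same term $(-1)^{|M|}\prod_{\{i,j\}\in M}h_{i,j}^2\prod_{v\ \mathrm{unmatched}}\alpha_v$ in both sums (in the first because a matched vertex meets a unique edge of $T$, so its minimum is that edge's weight; in the second as the product of the corresponding $2$-cycles and fixed points). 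Hence
\[
 \K\!\left(\frac{R}{\J_H}\right)-\det H=(\mathrm A)-(\mathrm B),
\]
where $(\mathrm A)$ collects the inclusion–exclusion terms coming from graphs $T$ that are not matchings, and $(\mathrm B)$ collects the determinant's contributions from cycle covers containing a cycle of length $\ge 3$. The problem is thereby reduced to the purely combinatorial inequality $(\mathrm A)\ge(\mathrm B)$.

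Finally I would prove $(\mathrm A)\ge(\mathrm B)$, and this is where positive semidefiniteness enters. Nonnegativity of the $2\times2$ principal minors gives $h_{i,j}^2\le\alpha_i\alpha_j$, and running this around a cycle $C$ of length $\ge 3$ yields $\prod_{e\in C}h_e\le\prod_{v\in C}\alpha_v$, bounding each long-cycle term of $(\mathrm B)$ by a product of diagonal entries; control of several simultaneous long cycles should come from nonnegativity of the corresponding higher principal minors. Against this I would exhibit, for each long cycle, enough surviving non-matching terms of $(\mathrm A)$ — the open paths obtained by deleting an edge of the cycle, which carry the same $\min$-weights — to dominate it, most cleanly by organizing everything as an induction on $n$ in which the box is sliced along the last coordinate: fixing $a_n=c$ either deactivates or tightens the corners $B_{i,n}$, and summing the resulting genuine (hence nonnegative) point counts over $c=0,\dots,\alpha_n-1$ should reproduce a cofactor-type expansion of $\det H$.

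The main obstacle is exactly this last reconciliation. The inclusion–exclusion weights involve minima over incident edges, which are neither multilinear nor of the same shape after slicing (the tightened box does not correspond to a matrix in $\mathcal{G}_{n-1}$), whereas $\det H$ is multilinear and its only off-diagonal excess comes from honest cycles. Bridging this mismatch — matching the non-multilinear path contributions of $(\mathrm A)$ against the cyclic contributions of $(\mathrm B)$, and guaranteeing that the latter never overwhelm the former — is where positive semidefiniteness must be used in full strength, and not merely through its $2\times2$ minors; this is the crux of the argument.
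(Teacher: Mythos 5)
Your reduction is set up correctly as far as it goes: the lattice-point description of the standard monomials, the inclusion--exclusion formula with the $\min$-weights, the cycle-type expansion of $\det H$, and the observation that the two sums agree exactly on matchings are all valid, so the identity $\K(R/\J_H)-\det H=(\mathrm A)-(\mathrm B)$ is correct. But this is a reformulation, not a proof: the inequality $(\mathrm A)\ge(\mathrm B)$ is equivalent to the theorem itself, and you leave it unproven --- you say so explicitly. That is the genuine gap, and it sits exactly where all of the difficulty lives. Moreover, the routes you sketch for closing it face concrete obstructions. Both $(\mathrm A)$ and $(\mathrm B)$ are \emph{signed} sums: a non-matching graph $T$ enters $(\mathrm A)$ with sign $(-1)^{|T|}$ (two-edge paths positively, triangles negatively, and so on), and a $k$-cycle enters $(\mathrm B)$ with sign $(-1)^{k-1}$, which is \emph{positive} precisely for the odd cycles. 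So ``dominate each long cycle of $(\mathrm B)$ by its open paths in $(\mathrm A)$'' is not even a well-posed comparison until the internal cancellations on each side are controlled. The $2\times 2$-minor bound $h_{i,j}^2\le\alpha_i\alpha_j$, and its consequence $\prod_{e\in C}h_e\le\prod_{v\in C}\alpha_v$, compares an odd-cycle term of $(\mathrm B)$ against a product of diagonal entries that does not occur in $(\mathrm A)$ at all, so by itself it proves nothing; and you correctly suspect that $2\times2$ minors cannot carry the full weight of positive semidefiniteness. Finally, the slicing induction fails for the structural reason you yourself identify: after fixing $a_n=c$, the tightened thresholds do not arise from any matrix in $\mathcal{G}_{n-1}$, so there is no induction hypothesis to invoke, and positive semidefiniteness of $H$ does not descend to any matrix naturally attached to the slice (the Schur complement is positive semidefinite but is not that matrix, and need not even have integer entries).

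For context: the paper does not prove this statement at all --- it imports it from \cite[Theorem 3.3]{KLR} --- but the technology used there, and deployed throughout this paper for the analogous equality statements (Theorems \ref{rooted edge matrix version}, \ref{matrix version of join of graphs} and \ref{required for simple graphs}), is quite different from direct enumeration. One inducts on $n$ via a short exact sequence $0\to R/(\J_H:x_n^r)\to R/\J_H\to R/\langle\J_H,x_n^r\rangle\to 0$ for a suitable exponent $r$, matches the two pieces against the column-multilinearity of $\det H$, and brings positive semidefiniteness to bear \emph{globally} on the reduced matrices, through tools such as Fischer's inequality (Theorem \ref{FI}) and the Courant--Weyl inequalities (Theorem \ref{CW}), rather than only through $2\times2$ minors. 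That is the missing mechanism in your argument: until you supply something of that strength to prove $(\mathrm A)\ge(\mathrm B)$, what you have established is only the (correct, and potentially useful) combinatorial identity, not the theorem.
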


By the above theorem, for a multigraph $G$ we have $\K(R/\M_G^{(1)})\ge\det\Q_G$, since $\Q_G$ is a positive semidefinite matrix \cite{KLR}. Theorem \ref{inequality for multigraph} is one of the key ingredients to obtain our results for the ideals $\M_G^{(1)}$. In fact, we obtain all our results for the ideal $\J_H$, where $H\in\mathcal G_n$ and then specialize them to the $1$-skeleton ideals $\M_G^{(1)}$.% using the fact that $\J_H=\M_G^{(1)}$ when $H=\widetilde Q_G$. %Since $\J_H=\M_G^{(1)}$ for $H=\widetilde Q_G$, we get all our results for $\M_G^{(1)}$ as a corollary.

Consider the complete multigraph $K_{n+1}^{a,b}$ and the monomial ideal $\M_{K_{n+1}^{a,b}}^{(1)}$ in the polynomial ring $R=\FK[x_1,\ldots,x_n]$. Sometimes we write $R=R_n$ to indicate the number of variables in $R$. Observe that the matrix $\Q_{K_{n+1}^{a,b}}=[h_{i,j}]_{1\le i,j\le n}$, where $h_{i,i}=a+(n-1)b$ for $i\in[n]$ and $h_{i,j}=b$ for $i\neq j$. The matrix $\Q_{K_{n+1}^{a,b}}$ is in $\mathcal G_n$. It has been shown in \cite[Remark 2.7]{KLS} that $\K\left(R_n/\M_{K_{n+1}^{a,b}}^{(1)}\right)=\det\widetilde Q_{K_{n+1}^{a,b}}$. This result has been generalized in the following way.

\begin{theorem}\textup{\cite[Theorem 2.5]{KLR}}\label{RC}
Let $G$ be a multigraph on $V=\{0,1,\ldots ,n\}$ obtained from the complete multigraph $K_{n+1}^{a,b}$ by deleting some edges through the root $0$. Then
\begin{equation}\label{e2}
\dim_{\mathbb K}\left(\frac{R_n}{\M_G^{(1)}}\right)=\det\widetilde Q_G.
\end{equation}
\end{theorem}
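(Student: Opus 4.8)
The plan is to prove equality by combining the general lower bound from Theorem~\ref{inequality for multigraph} with a matching upper bound on the number of standard monomials. Since $\widetilde Q_G$ is positive semidefinite, Theorem~\ref{inequality for multigraph} already gives $\K(R_n/\M_G^{(1)})\ge\det\widetilde Q_G$, so it suffices to show $\K(R_n/\M_G^{(1)})\le\det\widetilde Q_G$. I would induct on $n$, using as base case the result $\K(R_n/\M_{K_{n+1}^{a,b}}^{(1)})=\det\widetilde Q_{K_{n+1}^{a,b}}$ from \cite[Remark 2.7]{KLS} quoted just above, which handles the complete multigraph where no rooted edges are deleted.

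First I would set up notation for the deletion of rooted edges: write $c_i=a_{0,i}$ for the (possibly reduced) number of edges joining the root $0$ to vertex $i$, so that $\widetilde Q_G=[h_{i,j}]$ has diagonal entries $h_{i,i}=c_i+(n-1)b$ and off-diagonal entries $h_{i,j}=b$. The monomial ideal $\M_G^{(1)}$ is generated by the pure powers $x_i^{h_{i,i}}$ together with the mixed generators $x_i^{h_{i,i}-b}x_j^{h_{j,j}-b}$ for $i<j$. The key structural observation is that deleting a rooted edge at vertex $i$ lowers only the single exponent $h_{i,i}$ by one, affecting the pure power $x_i^{h_{i,i}}$ and every mixed generator involving $x_i$, but leaving all other generators untouched. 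This local behavior is what makes a clean recursion possible.

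The main step is to establish a recursion for the standard monomial count under deletion of a single rooted edge. I would isolate the variable $x_n$ (say), and stratify the standard monomials of $R_n/\M_G^{(1)}$ according to the exponent $a_n$ of $x_n$, splitting into the range $0\le a_n\le h_{n,n}-b$ and the range $h_{n,n}-b< a_n< h_{n,n}$. In the low range, fixing $a_n$ imposes a divisibility condition forcing each remaining variable to satisfy a strictly smaller exponent bound, which identifies these monomials with standard monomials of a $1$-skeleton ideal of a smaller multigraph on $n$ vertices; in the high range, the mixed generators force all other variables to vanish, contributing a controlled count. Summing these contributions should reproduce precisely the cofactor expansion of $\det\widetilde Q_G$ along the last row or column, so that the arithmetic of the count matches the arithmetic of the determinant term by term.

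The hard part will be verifying that the combinatorial stratification aligns exactly with the Laplace (cofactor) expansion of $\det\widetilde Q_G$, since a generic deletion of rooted edges breaks the symmetry of $K_{n+1}^{a,b}$ and the submatrices arising in the expansion are no longer of the clean complete-multigraph form. To handle this I would phrase the induction at the level of the matrix $H\in\mathcal G_n$ and the ideal $\J_H$ rather than the graph, exploiting the remark that $\J_{\widetilde Q_G}=\M_G^{(1)}$; this lets me track how lowering a diagonal entry of $H$ by one transforms both $\K(R/\J_H)$ and $\det H$, and to argue that each single-edge deletion decreases the two quantities by the same amount. Establishing this matched decrement, and checking that the positive semidefiniteness needed to invoke Theorem~\ref{inequality for multigraph} is preserved at every stage of the recursion, is where the real work lies.
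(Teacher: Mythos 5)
Your plan is, in essence, the paper's own argument. The paper deduces Theorem~\ref{RC} from the more general matrix statement, Theorem~\ref{rooted edge matrix version} (arbitrary diagonal entries $a_i\ge b$, constant off-diagonal entry $b$), proved by induction on $n$: one isolates a single variable and splits $R_n/\J_H$ via the short exact sequence built from the colon ideal $\left(\J_H:x_1^{a_1-b}\right)$ and the sum $\left\langle\J_H,x_1^{a_1-b}\right\rangle$ --- which is exactly your two-range stratification by the exponent of the chosen variable --- and then matches the two resulting terms against $\det H$ using multilinearity of the determinant together with the operations $\mathcal{C}_i-\mathcal{C}_1,\ \mathcal{R}_i-\mathcal{R}_1$. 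Your decision to run the induction at the level of $H\in\mathcal{G}_n$ and $\J_H$ rather than on graphs is precisely the paper's device for avoiding the bookkeeping you identify: deleting a vertex changes the diagonal entries of the truncated signless Laplacian, but the class of matrices in Theorem~\ref{rooted edge matrix version} is closed under taking principal submatrices.

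Two details of your stratification are wrong as stated and would break the count if executed literally. First, the boundary: since $x_i^{h_{i,i}-b}x_n^{h_{n,n}-b}$ is a generator, it is already active when $a_n=h_{n,n}-b$, so the split must be $0\le a_n<h_{n,n}-b$ versus $h_{n,n}-b\le a_n<h_{n,n}$. Second, and more seriously, in the high range the mixed generators do \emph{not} force the other variables to vanish; they force $a_i<h_{i,i}-b$ for every $i\ne n$, so this stratum is a box contributing $b\prod_{i<n}(h_{i,i}-b)$ standard monomials (in the paper this stratum appears as the colon ideal $\J_{H_1}$ with $H_1=\operatorname{diag}[b,a_2-b,\ldots,a_n-b]$, computed directly, not by induction). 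That product is exactly what is needed: the identity to verify is $\K(R_n/\J_H)=(h_{n,n}-b)\,\K(R_{n-1}/\J_{H_2})+b\prod_{i<n}(h_{i,i}-b)$, matching $\det H=(h_{n,n}-b)\det H_2+b\prod_{i<n}(h_{i,i}-b)$, where $H_2$ is the principal submatrix on $[n-1]$; induction applies only to the low range, whose monomials are cut out by $\J_{H_2}$ alone (no ``strictly smaller exponent bounds'' appear there). Finally, once the recursion is exact on both sides you get equality outright, so the appeal to Theorem~\ref{inequality for multigraph} and your worry about preserving positive semidefiniteness along the recursion are superfluous --- the paper's proof of Theorem~\ref{rooted edge matrix version} makes no semidefiniteness assumption at all.
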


  We now proceed to give a generalization of Theorem \ref{RC} in the context of the ideal $\J_H$ for $H\in\mathcal G_n$. %(Theorem \ref{rooted edge matrix version}).
  Recall that given a matrix $H\in \mathcal{G}_n$ we have associated the monomial ideal $\J_H\subseteq R_n$ such that $\J_H=\M_G^{(1)}$ when $H=\widetilde Q_G$. For the graph $G$ in Theorem \ref{RC}, the truncated signless Laplacian $\Q_G=[h_{i,j}]_{1\le i,j\le n}$, where $b\le h_{i,i}\le a+(n-1)b$ for $i\in[n]$ and $h_{i,j}=b$ for $i\neq j$. 
 
 In the calculations below the $i^{th}$ row and $j^{th}$ column of a matrix $H$ are denoted by $\mathcal R_i$ and $\mathcal{C}_j$, respectively.
 The elementary column operation $\mathcal{C}_j\pm(\mathcal{C}_{k_1}+\dots+\mathcal C_{k_r})$ on $H$ means the matrix $H$ is transformed to a matrix $H'$, where only $j^{th}$ column $\mathcal{C}_{j}'$ of $H'$ differs from the $j^{th}$ column $\mathcal{C}_j$ of $H$ and $\mathcal{C}_j'=\mathcal{C}_j\pm(\mathcal{C}_{k_1}+\dots+\mathcal C_{k_r})$. The elementary row operations are also defined in a similar way.

\begin{theorem}\label{rooted edge matrix version}
 Let $H=[h_{i,j}]_{1\le i,j\le n}$ 
be a matrix over nonnegative integers such that $h_{i,i}=a_i$ for $i\in[n]$ and $h_{i,j}=b$ for $i\ne j$, where $a_i\geq b$ for each $i\in[n]$. Then
\[\dim_{\mathbb{K}}\left(\frac{R_n}{\J_H}\right)=\det H.\]
\end{theorem}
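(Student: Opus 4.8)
The plan is to prove the equality directly, by putting both sides in closed form and matching them term by term, rather than routing through the general inequality of Theorem~\ref{inequality for multigraph}. The starting point is to record the shape of $H$ as a diagonal-plus-rank-one perturbation,
\[
H = D + b\,\mathbf{1}\mathbf{1}^{t},\qquad D=\operatorname{diag}(a_1-b,\ldots,a_n-b),
\]
where $\mathbf{1}=(1,\ldots,1)^{t}$; this is legitimate because each $a_i\ge b\ge 0$, so $D$ has nonnegative diagonal entries.

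First I would pin down $\det H$. Applying the rank-one update formula (matrix determinant lemma) to $H=D+b\,\mathbf{1}\mathbf{1}^{t}$ gives
\[
\det H=\prod_{i=1}^{n}(a_i-b)\;+\;b\sum_{j=1}^{n}\ \prod_{i\neq j}(a_i-b).
\]
When every $a_i>b$ this is immediate from $\det\bigl(D+b\,\mathbf{1}\mathbf{1}^{t}\bigr)=\det D\,\bigl(1+b\,\mathbf{1}^{t}D^{-1}\mathbf{1}\bigr)$; the degenerate cases in which some $a_i=b$ then follow since both sides are polynomials in $a_1,\ldots,a_n$ agreeing whenever all $a_i>b$, hence agreeing identically. (Equivalently, one expands along the uniform off-diagonal pattern.)

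Next I would read off the standard monomials of $R_n/\J_H$ straight from the generators. Since $\alpha_i=a_i$ and $h_{i,j}=b$, we have $\J_H=\langle x_t^{a_t},\ x_i^{a_i-b}x_j^{a_j-b}\mid t\in[n],\ 1\le i<j\le n\rangle$. A monomial $\mathbf{x}^{\mathbf{c}}=\prod_i x_i^{c_i}$ avoids every $x_t^{a_t}$ exactly when $0\le c_i\le a_i-1$ for all $i$, and it avoids every $x_i^{a_i-b}x_j^{a_j-b}$ exactly when no pair $i<j$ has both $c_i\ge a_i-b$ and $c_j\ge a_j-b$. Calling a coordinate \emph{high} when $c_i\ge a_i-b$ and \emph{low} when $c_i<a_i-b$, the second condition says that at most one coordinate is high. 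Thus $\mathbf{x}^{\mathbf{c}}$ is a standard monomial precisely when $c_i\in\{0,\ldots,a_i-1\}$ for every $i$ and at most one index is high; both implications are direct divisibility checks, so this is a genuine characterization. Finally I would count these monomials by conditioning on the number of high coordinates, which is $0$ or $1$. For each $i$ the low range $\{0,\ldots,a_i-b-1\}$ has $a_i-b$ elements and the high range $\{a_i-b,\ldots,a_i-1\}$ has $b$ elements; hence the monomials with all coordinates low number $\prod_i(a_i-b)$, and those with a single high coordinate at position $j$ number $b\prod_{i\neq j}(a_i-b)$. Summing over $j$ reproduces exactly the closed form for $\det H$ found above, giving $\K(R_n/\J_H)=\det H$.

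There is no serious obstacle here: the argument is a divisibility-based count matched against a rank-one determinant. The only points requiring care are the determinant identity in the degenerate cases $a_i=b$ (covered by the polynomial-identity remark, and consistent combinatorially because such an index is then forced to be high while contributing a vanishing factor $a_i-b=0$ on both sides), and the verification that the high/low dichotomy captures divisibility by the quadratic generators exactly. As a cross-check one may instead exploit that $H=D+b\,\mathbf{1}\mathbf{1}^{t}$ is positive semidefinite, so Theorem~\ref{inequality for multigraph} already yields $\K(R_n/\J_H)\ge\det H$; it would then suffice to prove the reverse inequality by showing that every monomial outside the described set lies in $\J_H$ and that the described set has cardinality $\det H$.
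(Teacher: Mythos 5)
Your proof is correct, but it takes a genuinely different route from the paper's. The paper argues by induction on $n$: setting $r=a_1-b$, it identifies the colon ideal $(\J_H:x_1^r)$ and the sum $\langle\J_H,x_1^r\rangle$ with ideals of smaller or simpler matrices, splits $\K(R_n/\J_H)$ along the short exact sequence $0\to R_n/(\J_H:x_1^r)\to R_n/\J_H\to R_n/\langle\J_H,x_1^r\rangle\to 0$, and matches the two pieces against $\det H$ using additivity of the determinant in a column together with row/column operations. You instead put both sides in closed form: the matrix determinant lemma applied to $H=D+b\,\mathbf{1}\mathbf{1}^{t}$ gives $\det H=\prod_{i}(a_i-b)+b\sum_{j}\prod_{i\neq j}(a_i-b)$, while a direct divisibility analysis shows the standard monomials are exactly the $\mathbf{x}^{\mathbf{c}}$ with $c_i\le a_i-1$ for all $i$ and at most one ``high'' coordinate $c_i\ge a_i-b$, whose count is the same expression. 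Your treatment of the degenerate cases is sound: the determinant identity extends from the locus $a_i>b$ by the polynomial-identity argument, and combinatorially an index with $a_i=b$ is forced high (contributing a factor $b$), while two such indices annihilate both sides. What your approach buys is an explicit description and enumeration of the standard monomials, with no induction and no homological bookkeeping --- a description that could be useful for the weighted bijection with $TU$-subgraphs discussed in Section \ref{characterizing subgraphs}. What the paper's approach buys is reusable machinery: the same colon-ideal/short-exact-sequence decomposition is applied almost verbatim to prove the more general Theorem \ref{matrix version of join of graphs}, whose block structure would make a closed-form count of standard monomials substantially messier.
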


\begin{proof}
  We prove this by induction on $n$. For $n=2$ we have that $H=
 \begin{bmatrix}
  a_1 & b \\
  b & a_2
 \end{bmatrix}_{2\times 2}
$ and the ideal $\J_H=\left\langle x_1^{a_1},x_2^{a_2}, x_1^{a_1-b}x_2^{a_2-b} \right\rangle$. Thus $\dim_{\mathbb{K}}(R_2/\J_H)=a_1a_2-b^2=\det H$.

Suppose $n\geq 3$ and the theorem is true for any $m$ with $m<n$. If $b=0$, then $\J_H=\left\langle x_i^{a_i}:1\le i\le n\right\rangle$. Thus $\K(R_n/\J_H)=\prod_{i=1}^na_i=\det H$. If $a_i=b$ for each $i$, then $\dim_{\mathbb{K}}(R_n/\J_H)=0=\det H$. Hence, without loss of generality, assume that $a_1>b>0$. Let $r=a_1-b>0$. The ideal $\J_H=\left\langle x_l^{a_l}, x_i^{a_i-b}x_j^{a_j-b}: 1\leq l\leq n,~1\leq i<j\leq n \right\rangle$.

Let $H_1=\operatorname{diag}[b,a_2-b,a_3-b,\ldots,a_n-b]$ be the $n\times n$ diagonal matrix and let $H_2$ be the matrix obtained from $H$ by deleting row $\mathcal{R}_1$ and column $\mathcal{C}_1$.
 We see that $\left(\J_H:x_1^r\right)=\J_{H_1}$ and $\left\langle \J_H,x_1^r\right\rangle=\langle \J_{H_2},x_1^r\rangle$. Therefore, $\dim_{\mathbb{K}}(R_n/\left(\J_H:x_1^r\right))=\dim_{\mathbb{K}}(R_n/\J_{H_1})=\det H_1$ (since $H_1$ is a diagonal matrix). Moreover, $\dim_{\mathbb{K}}(R_n/\left\langle \J_H,x_1^r\right\rangle)=r\dim_{\mathbb{K}}(R_{n-1}/\J_{H_2})=r\det H_2$ (by induction hypothesis). Consider the short exact sequence of $\mathbb{K}$-vector spaces,
\begin{align}\label{short exact sequence}
 0\rightarrow\frac{R_n}{\left(\J_H:x_{1}^{r}\right)}\xrightarrow{\mu_{x_{1}^{r}}}\frac{R_n}{\J_H}x\xrightarrow{\nu}\frac{R_n}{\left\langle \J_H,x_{1}^{r}\right\rangle}\rightarrow 0,
\end{align}
where $\mu_{x_{1}^{r}}$ is the map induced by multiplication by $x_1^r$ and $\nu$ is the natural quotient map. We have, $\dim_{\mathbb{K}}(R_n/\J_H)=\K(R_n/\left(\J_H:x_{1}^{r}\right))+\K(R_n/\left\langle \J_H,x_{1}^{r}\right\rangle)=\K(R_n/\J_{H_1})+r\K(R_{n-1}/\J_{H_2})$. Hence, \[\dim_{\mathbb{K}}(R_n/\J_H)=\det H_1+r\det H_2.\]

Writing $a_1=b+r$ and using the additivity property of the determinant, we see that $\det H=r\det H_2+\det A$, where $A$ is the matrix obtained from $H$ by replacing the element $a_1$ with $b$. On applying elementary column and row operations, $\mathcal{C}_2-\mathcal{C}_1, \mathcal{R}_2-\mathcal{R}_1,\ldots ,\mathcal{C}_n-\mathcal{C}_1, \mathcal{R}_n-\mathcal{R}_1$ on $A$, it reduces to the matrix $H_1$.
Thus $\det A=\det H_1$ and hence $\K(R_n/\J_H)=\det H$. \qed

\end{proof}
\vspace{4mm}
In the next theorem we further generalize Theorem \ref{rooted edge matrix version}. Consequently, we provide some new families of multigraphs $G$ which satisfy $\K(R/\M_G^{(1)})=\det\Q_G$.

\begin{theorem}\label{matrix version of join of graphs}
 Let $H_i=\begin{bmatrix}
           \alpha_{i,1} & b_i & \cdots & b_i \\
           b_i & \alpha_{i,2} & \cdots & b_i \\
           \vdots & \vdots & \ddots & \vdots \\
           b_i & b_i & \cdots & \alpha_{i,n_i}
          \end{bmatrix}_{n_i\times n_i}
$ with $\alpha_{i,j}\geq b_i$ and let $A_i$ be the $n_i\times n_i$ matrix with all entries equal to $d_i$. Consider the matrix 
\begin{align}\label{join graph matrix}
H=
\begin{bmatrix}
H_1 & A_1 & A_2 & \cdots & A_{r-1} \\
A_1 & H_2 & A_2 & \cdots & A_{r-1} \\
A_2 & A_2 & H_3 & \cdots & A_{r-1} \\
\vdots & \vdots & \vdots & \ddots & \vdots \\
A_{r-1} & A_{r-1} & A_{r-1} & \cdots & H_r
\end{bmatrix}_{\sum_{i=1}^rn_i\times\sum_{i=1}^rn_i}
\end{align}
with $b_1\geq d_1,\,b_i\geq d_{i-1}$ for $2\leq i\leq r$, and $d_i\geq d_{i+1}$ for $1\leq i\leq r-2$. Assume that $\alpha_{i,j},\,b_i$ and $d_i$ are all nonnegative integers. Suppose $n=\sum_{i=1}^rn_i$. Then $\K(R_n/\J_H)=\det H$.
\end{theorem}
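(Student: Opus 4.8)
The plan is to induct on the total number of variables $n=\sum_{i=1}^r n_i$, driving the induction via the short exact sequence \eqref{short exact sequence} used to prove Theorem \ref{rooted edge matrix version}. The base case $r=1$ is precisely Theorem \ref{rooted edge matrix version}, since then $H=H_1$ is a rooted-edge matrix; the degenerate situations in which the relevant diagonal entry already equals its governing off-diagonal are settled directly, as in that proof. For the inductive step I focus on the first vertex $x_1$ of the block $H_1$, whose diagonal entry is $\alpha_{1,1}$. Let $c$ be the largest off-diagonal entry in the first row of $H$; the hypotheses $b_1\ge d_1\ge d_2\ge\cdots$ and $b_i\ge d_{i-1}$ give $c=b_1$ when $n_1\ge 2$ and $c=d_1$ when $n_1=1$ (so $r\ge 2$). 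We may assume $\alpha_{1,1}>c$ (reordering the vertices of $H_1$ when $n_1\ge 2$), the case $\alpha_{1,1}=c$ being degenerate. Setting $\rho=\alpha_{1,1}-c$ and feeding $x_1^{\rho}$ into \eqref{short exact sequence} yields
\[
\K\!\left(\frac{R_n}{\J_H}\right)=\K\!\left(\frac{R_n}{(\J_H:x_1^{\rho})}\right)+\K\!\left(\frac{R_n}{\langle \J_H,x_1^{\rho}\rangle}\right).
\]

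The crux is to identify the two ideals on the right. Coloning by $x_1^{\rho}$ converts every cross generator $x_1^{\alpha_{1,1}-h}x_v^{\alpha_v-h}$ with $h=c$ into the pure power $x_v^{\alpha_v-c}$, while those with $h<c$ keep a positive power of $x_1$. Let $S$ be the set of vertices joined to $x_1$ by the maximal value $c$; by the above orderings this is the set of remaining vertices of $H_1$ when $n_1\ge 2$, and the block $H_2$ when $n_1=1$. The same inequalities guarantee that every generator linking a vertex of $S$ to a vertex outside $S\cup\{x_1\}$ is divisible by one of the new pure powers, hence redundant, so the variables of $S$ split off. They contribute the factor $\J_B$, where $B$ has diagonal $\alpha_v-c$ and constant off-diagonal equal to the internal off-diagonal of $S$ minus $c$ (namely $b_1-c=0$ in the first case, so that $B$ is diagonal, and $b_2-c$ in the second); by Theorem \ref{rooted edge matrix version}, $\K(R/\J_B)=\det B$. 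The complementary factor is $\J_{H^{\ast}}$, where $H^{\ast}$ is obtained from $H$ by deleting the rows and columns of $S$ and lowering the $(1,1)$ entry to $c$. One checks that $H^{\ast}$ is again of join form, with $x_1$ now a singleton first block, so $\K(R/\J_{H^{\ast}})=\det H^{\ast}$ by induction. Adding $x_1^{\rho}$ to $\J_H$ instead makes every generator through $x_1$ redundant and leaves $\langle \J_{H_2},x_1^{\rho}\rangle$, where $H_2$ is $H$ with its first row and column removed; $H_2$ is once more of join form, whence $\K(R/\langle \J_H,x_1^{\rho}\rangle)=\rho\,\det H_2$ by induction.

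It remains to match the total $\det B\cdot\det H^{\ast}+\rho\,\det H_2$ with $\det H$. Writing $\alpha_{1,1}=c+\rho$ and using additivity of the determinant in the first column gives $\det H=\det A+\rho\,\det H_2$, where $A$ is $H$ with its $(1,1)$ entry replaced by $c$. Performing, for each $v\in S$, the symmetric operations $\mathcal C_v-\mathcal C_1$ and $\mathcal R_v-\mathcal R_1$ clears the coupling of $S$ to $x_1$ and to the blocks outside $S$ (precisely because the relevant entries coincide) and block-diagonalizes $A$ as $B\oplus H^{\ast}$; thus $\det A=\det B\cdot\det H^{\ast}$ and the two sides agree.

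The main obstacle is the colon-ideal bookkeeping in the second paragraph: one has to check exactly which cross generators are absorbed by the freshly created pure powers, and this is where each of the hypotheses $b_1\ge d_1$, $b_i\ge d_{i-1}$ and $d_i\ge d_{i+1}$ is consumed. The singleton case $n_1=1$ is the delicate one: there $H_2$ does not collapse into pure powers but persists as a genuine rooted-edge block $\J_B$ that must be stripped off via Theorem \ref{rooted edge matrix version} while being decoupled from $H_3,\dots,H_r$, and keeping this tensor factorization in exact correspondence with the block-diagonalization of $A$ — together with the ties $c=b_1=d_1$ and the boundary $\rho=0$ — is what demands the most care.
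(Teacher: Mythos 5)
Your proof is correct, and its skeleton is the same as the paper's: induction on $n$ driven by the short exact sequence $0\to R_n/(\J_H:x^{\rho})\to R_n/\J_H\to R_n/\langle \J_H,x^{\rho}\rangle\to 0$, identification of both outer quotients with ideals attached to smaller matrices of the same shape, and the matching determinant identity obtained from column additivity followed by the symmetric operations $\mathcal{C}_v-\mathcal{C}_1,\ \mathcal{R}_v-\mathcal{R}_1$. The decomposition itself is genuinely different, however, because you peel the \emph{first} vertex while the paper peels the \emph{last} block $H_r$, and the join structure is not symmetric under reversing block order: block $s<t$ is coupled through $d_{t-1}$, so the last block sees every other block through the single smallest value $d_{r-1}$, whereas your first vertex sees the blocks through the varying values $d_1\geq\cdots\geq d_{r-1}$, with the maximum $d_1$ attained on $H_2$. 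This changes the singleton case substantially: in the paper's Case I ($n_r=1$) the colon ideal is $\langle \J_{B_3},x_{r,1}^{d_{r-1}}\rangle$, where $B_3$ is the remaining matrix with \emph{every} entry lowered by $d_{r-1}$ --- still of join form, so plain induction suffices --- while in your case $n_1=1$ the colon ideal tensor-factors into the rooted-edge ideal of your $B$ (diagonal $\alpha_{2,l}-d_1$, off-diagonal $b_2-d_1$) on the block-$2$ variables, dispatched by Theorem \ref{rooted edge matrix version}, and the join ideal of $H^{*}$ on the rest; here the inequalities $b_2\geq d_1$ and $d_1\geq d_{t-1}$ are exactly what absorbs the block-$2$-to-block-$t$ generators into the new pure powers and legitimizes the factorization. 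Your bookkeeping is right, including the boundary $\rho=0$; the only slip is cosmetic: in tie cases (e.g.\ $d_1=d_2$, or $b_1=d_1$ with $n_1\geq 2$) the set of vertices realizing the maximal value $c$ is strictly larger than what you assert, so $S$ should simply be \emph{defined} as the block-$1$ siblings (resp.\ block $H_2$), after which the argument goes through verbatim. In sum, the paper's choice buys a cleaner singleton case that never leaves the induction, while yours buys two structurally parallel cases (each splits off an ``$S$-factor'') at the price of invoking Theorem \ref{rooted edge matrix version} inside the inductive step and of heavier colon bookkeeping --- a price you correctly identified as the crux.
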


\begin{proof}
 If $n=2$, then we see that $\K(R_2/\J_H)=\det H$. We prove the theorem by induction on the order $n$ of $H$. Assume that $n\geq 3$ and the theorem holds for every $m\times m$ matrix of the above form for $m<n$. The monomial ideal $\J_H$ is generated by the following monomials
 \begin{align*}
 x_{i,j}^{\alpha_{i,j}}&: 1\leq i\leq r,~1\leq j\leq n_i\,,\\
 x_{i,u}^{\alpha_{i,u}-b_i}x_{i,v}^{\alpha_{i,v}-b_i}&: 1\leq i\leq r,~1\leq u<v\leq n_i\,, \\
 x_{s,w}^{\alpha_{s,w}-d_{t-1}}x_{t,l}^{\alpha_{t,l}-d_{t-1}}&: 1\leq s\leq r-1,~1\leq w\leq n_s,~s+1\leq t\leq r,~1\leq l\leq n_t\,. 
 \end{align*} 
 
 We now divide the proof into two cases:
 
 \noindent
 {\bf Case I :} $n_r=1$. Let $B_1$ be the matrix obtained from $H$ by deleting the row and column containing the diagonal element $\alpha_{r1}$ and $B_2$ be an $(n-1)\times(n-1)$ matrix whose all entries are $d_{r-1}$. Let $B_3=B_1-B_2$. We see that the ideals $\left(\J_H:x_{r,1}^{\alpha_{r,1}-d_{r-1}}\right)=\left\langle \J_{B_3},x_{r,1}^{d_{r-1}}\right\rangle$ and $\left\langle \J_{H},x_{r,1}^{\alpha_{r,1}-d_{r-1}}\right\rangle=\left\langle \J_{B_1}, x_{r,1}^{\alpha_{r,1}-d_{r-1}}\right\rangle$. By the induction hypothesis,
 
 \begin{align*}
  \K\left(\frac{R_{n-1}}{\J_{B_3}}\right)
   =\det B_3 \quad \text{and}\quad \K\left(\frac{R_{n-1}}{\J_{B_1}}\right)=\det B_1.
 \end{align*}
Thus $\K\left(R_n/(\J_H:x_{r,1}^{\alpha_{r,1}-d_{r-1}})\right)=d_{r-1}\K(R_{n-1}/\J_{B_3})=d_{r-1}\det B_3$. Moreover, we see that $\K\left(R_n/\left\langle \J_{H},x_{r,1}^{\alpha_{r,1}-d_{r-1}}\right\rangle\right)=(\alpha_{r,1}-d_{r-1})\K(R_{n-1}/\J_{B_1})=(\alpha_{r,1}-d_{r-1})\det B_1$.
Using the short exact sequence of $\mathbb{K}$-vector spaces
\begin{align*}%\label{short exact sequence2}
 0\rightarrow\frac{R_n}{\left(\J_H:x_{r,1}^{\alpha_{r,1}-d_{r-1}}\right)}\xrightarrow{\mu_{x_{r,1}^{\alpha_{r,1}-d_{r-1}}}}\frac{R_n}{\J_H}\xrightarrow{\nu}\frac{R_n}{\left\langle \J_{H},x_{r,1}^{\alpha_{r,1}-d_{r-1}}\right\rangle}\rightarrow 0,
\end{align*}
we get $\K\left(R_n/\J_H\right)=d_{r-1}\det B_3+(\alpha_{r,1}-d_{r-1})\det B_1$. Here $\mu_{x_{r,1}^{\alpha_{r,1}-d_{r-1}}}$ is the map induced by multiplication by $x_{r,1}^{\alpha_{r,1}-d_{r-1}}$ and $\nu$ is the natural quotient map. As the determinant is linear on columns, writing $\alpha_{r,1}=(\alpha_{r,1}-d_{r-1})+d_{r-1}$, we have $\det H=(\alpha_{r,1}-d_{r-1})\det B_1+\det B_4$, where $B_4$ is the matrix obtained from $H$ by replacing the diagonal element $\alpha_{r,1}$ with $d_{r-1}$. Applying elementary column and row operations, $\mathcal{C}_1-\mathcal{C}_n,\mathcal{R}_1-\mathcal{R}_n,\ldots ,\mathcal{C}_{n-1}-\mathcal{C}_{n},\mathcal{R}_{n-1}-\mathcal{R}_n$ on $B_4$, we get $\det B_4=d_{r-1}\det B_3$. Consequently, $\K(R_n/\J_H)=\det H$.

\vspace{5mm}
\noindent
{\bf Case II :} $n_r\geq 2$. Let $B_5$ be the matrix obtained from $H$ by first deleting rows and columns containing the diagonal elements $\alpha_{r,1},\alpha_{r,2},\ldots ,\alpha_{r,n_r-1}$ and then replacing the diagonal element $\alpha_{r,n_r}$ with $b_r$. Hence, $B_5$ is an $(n+1-n_r)\times(n+1-n_r)$ matrix. Note that the ideal $\left( \J_H:x_{r,n_r}^{\alpha_{r,n_r}-b_r} \right)=\left\langle \J_{B_5},x_{r,1}^{\alpha_{r,1}-b_r},\ldots ,x_{r,n_r-1}^{\alpha_{r,n_r-1}-b_r} \right\rangle$. By the induction hypothesis, $\K\left(R_{n+1-n_r}/\J_{B_5}\right)=\det B_5$. Thus
\begin{align*}
 \K\left(\frac{R_n}{\left( \J_H:x_{r,n_r}^{\alpha_{r,n_r}-b_r} \right)}\right)&=\left(\prod_{i=1}^{n_r-1}(\alpha_{r,i}-b_r)\right)\K\left(\frac{R_{n+1-n_r}}{\J_{B_5}}\right)\\
 &=\left(\prod_{i=1}^{n_r-1}(\alpha_{r,i}-b_r)\right)\det B_5
\end{align*}

 Let $B_6$ be the $(n-1)\times(n-1)$ matrix obtained from $H$ by deleting the row and column containing the diagonal element $\alpha_{r,n_r}$. We see that $\left\langle \J_H,x_{r,n_r}^{\alpha_{r,n_r}-b_r} \right\rangle=\left\langle \J_{B_6},x_{r,n_r}^{\alpha_{rn_r}-b_r} \right\rangle$. Also, $\K(R_{n-1}/\J_{B_6})=\det B_6$ (by the induction hypothesis). Thus 
 \begin{align*}
 \K\left(\frac{R_n}{\left\langle \J_H,x_{r,n_r}^{\alpha_{r,n_r}-b_r}\right\rangle}\right)&=(\alpha_{r,n_r}-b_r)\K\left(\frac{R_{n-1}}{\J_{B_6}}\right)\\
 &=(\alpha_{r,n_r}-b_r)\det B_6.
 \end{align*}
 
 \noindent
 Now using the short exact sequence of $\mathbb{K}$-vector spaces
\begin{align*}%\label{short exact sequence2}
 0\rightarrow\frac{R_n}{\left(\J_H:x_{r,n_r}^{\alpha_{r,n_r}-b_r}\right)}\xrightarrow{\mu_{x_{r,n_r}^{\alpha_{r,n_r}-b_r}}}\frac{R_n}{\J_H}\xrightarrow{\nu}\frac{R_n}{\left\langle \J_{H},x_{r,n_r}^{\alpha_{r,n_r}-b_r}\right\rangle}\rightarrow 0,
\end{align*}
we get $\K(R_n/\J_H)=\left(\prod_{i=1}^{n_r-1}(\alpha_{r,i}-b_r)\right)\det B_5+(\alpha_{r,n_r}-b_r)\det B_6$. Here $\mu_{x_{r,n_r}^{\alpha_{r,n_r}-b_r}}$ is the map induced by multiplication by $x_{r,n_r}^{\alpha_{r,n_r}-b_r}$ and $\nu$ is the natural quotient map. Writing $\alpha_{r,n_r}=(\alpha_{r,n_r}-b_r)+b_r$ and using the additivity property of the determinant we see that $\det H=(\alpha_{r,n_r}-b_r)\det B_6+\det B_7$, where $B_7$ is the matrix obtained from $H$ by replacing the diagonal element $\alpha_{r,n_r}$ with $b_r$. Applying the elementary column and row operations, $\mathcal{C}_i-\mathcal{C}_n,\mathcal{R}_i-\mathcal{R}_n$ for $n-n_r< i\leq n-1$ on $B_7$, we get, $\det B_7=\left(\prod_{i=1}^{n_r-1}(\alpha_{r,i}-b_r)\right)\det B_5$. Consequently, $\K(R_n/\J_H)=\det H$. \qed
\end{proof}
\vspace{4mm}
The above theorem provides example of a matrix $H\in\mathcal{G}_n$ such that $\K(R_n/J_H)=\det H$. Moreover, using this result we can give some new families of multigraphs $G$ for which the number of standard monomials of $R/\M_G^{(1)}$ is same as the determinant of the truncated signless Laplacian $\Q_G$. These graphs are expressed as a certain {\it $d$-fold product} of some multigraphs which are obtained from complete multigraphs by removing some edges incident to the root. 
  \begin{definition}\label{product graph}
  Let $G_1$ be a multigraph on the vertex set $\{0,1,\ldots,n\}$ and let $G_2$ be a multigraph on the vertex set $\{0,1,\ldots ,m\}$. Let $d$ be a nonnegative integer. Define the graph $G_1*_{d}G_2$ on the vertex set $\{0,1,\ldots,n,n+1,\ldots ,n+m\}$ as follows. If $1\le i,j\leq n$ then the number of edges between $i$ and $j$ in $G$ is same as the number of edges between $i$ and $j$ in $G_1$. If $i,j\geq n+1$ then the number of edges between $i$ and $j$ in $G$ is same as the number of edges between $i-n$ and $j-n$ in $G_2$. For $1\le i\le n$ the number of edges between $0$ and $i$ in $G$ is same as the number of edges between $0$ and $i$ in $G_1$.
  For $j\ge n+1$ the number of edges between $0$ and $j$ in $G$ is same as the number of edges between $0$ and $j-n$ in $G_2$. For each $1\le i\leq n$ and $j\geq n+1$ the number edges between $i$ and $j$ is exactly $d$. 
  \end{definition}
  \begin{example}
  We give an example of the graph constructed above in Figure \ref{figure 123143}.
\begin{figure}[h!]
\tikzset{multi/.style={to path={
\pgfextra{% 
 \pgfmathsetmacro{\startf}{-(#1-1)/2}  
 \pgfmathsetmacro{\endf}{-\startf} 
 \pgfmathsetmacro{\stepf}{\startf+1}}
 \ifnum 1=#1 -- (\tikztotarget)  \else
\foreach \i in {\startf,\stepf,...,\endf}
    {%
     (\tikztostart)        parabola[bend pos=0.5] bend +(0,0.2*\i)  (\tikztotarget)
      }
      \fi   
     \tikztonodes
      }}}
      
      \tikzset{multi1/.style={to path={
\pgfextra{% 
 \pgfmathsetmacro{\startf}{-(#1-1)/2}  
 \pgfmathsetmacro{\endf}{-\startf} 
 \pgfmathsetmacro{\stepf}{\startf+1}}
 \ifnum 1=#1 -- (\tikztotarget)  \else
     let \p{mid}=($(\tikztostart)!0.5!(\tikztotarget)$) 
         in
\foreach \i in {\startf,\stepf,...,\endf}
    {%
     (\tikztostart) .. controls ($ (\p{mid})!\i*6pt!90:(\tikztotarget) $) .. (\tikztotarget)
      }
      \fi   
     \tikztonodes
}}}   

\begin{tikzpicture}
[scale=.55]
      \centering
\draw [fill] (0,1.5) circle [radius=0.1];
\draw [fill] (3,1.5) circle [radius=0.1];
\draw [fill] (1.5,-1.5) circle [radius=0.1];
\draw (0,1.5) edge[multi=3] (3,1.5);
\draw (0,1.5) edge[multi1=2] (1.5,-1.5);
\draw (3,1.5) edge[multi1=3] (1.5,-1.5);
\node at (5,0) {$*_1$};
\node at (1.5,-2) {$0$};
\node at (-0.5,1.5) {$1$};
\node at (3.5,1.5) {$2$};
\node at (1.5,-3.5) {$G_1$};
\draw [fill] (7,1.5) circle [radius=0.1];
\draw [fill] (10,1.5) circle [radius=0.1];
\draw [fill] (8.5,-1.5) circle [radius=0.1];
\draw (7,1.5) edge[multi=2] (10,1.5);
\draw (10,1.5) edge[multi1=2] (8.5,-1.5);
\node at (8.5,-2) {$0$};
\node at (10.5,1.5) {$2$};
\node at (6.5,1.5) {$1$};
\node at (8.5,-3.5) {$G_2$};
\draw (8.5,-1.5) edge[multi1=2] (7,1.5);
\node at (12,0) {$=$};
\draw [fill] (14,2) circle [radius=0.1];
\draw [fill] (17,2) circle [radius=0.1];
\draw [fill] (14,0) circle [radius=0.1];
\draw [fill] (17,0) circle [radius=0.1];
\draw [fill] (15.5,-2) circle [radius=0.1];
\node at (15.5,-2.5) {$0$};
\node at (13.5,2) {$1$};
\node at (17.5,2) {$2$};
\node at (13.5,0) {$3$};
\node at (17.5,0) {$4$};
\node at (15.5,-3.5) {$G_1*_1G_2$};
\draw (14,2) edge[multi=3] (17,2);
\draw (14,0) edge[multi=2] (17,0);
\draw (17,0) edge[multi1=2] (15.5,-2);
\draw (14,0) edge[multi1=2] (15.5,-2);
\draw (14,2)--(14,0)--(17,2)--(17,0)--(14,2);
\draw (15.5,-2) edge[multi1=3] (17,2);
\draw (15.5,-2) edge[multi1=2] (14,2);
\end{tikzpicture}
\caption{$G_1*_dG_2$ for $d=1$.}\label{figure 123143}
\end{figure}
  \end{example}
  \begin{corollary}\label{new graphs}
   Let $G_i$ be a multigraph on the vertex set $\{0,1,\ldots,n_i\}$ obtained from a complete multigraph $K_{n_i+1}^{a_i,b_i}$ by removing some edges through the root $0$, where $1\leq i\leq r$. Suppose $n=\sum_{i=1}^rn_i$. Let $G=((\cdots(G_1*_{d_1}G_2)*_{d_2}\cdots *_{d_{r-2}} G_{r-1})*_{d_{r-1}}G_r)$ be the multigraph on the vertex set $\{0,1,\ldots,n\}$, where $d_1\geq d_2\geq\dots\geq d_{r-1}$ are nonnegative integers, $b_1\geq d_1$, and $b_i\geq d_{i-1}$ for $2\le i\le r$.  Then $\dim_{\FK}(R_n/\M_G^{(1)})=\det\widetilde Q_G$.
  \end{corollary}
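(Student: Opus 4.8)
The plan is to recognize the truncated signless Laplace matrix $\widetilde Q_G$ as an instance of the matrix $H$ appearing in Theorem \ref{matrix version of join of graphs}, and then to invoke that theorem verbatim. Since $\J_H=\M_G^{(1)}$ precisely when $H=\widetilde Q_G$, the whole statement reduces to checking that $\widetilde Q_G$ has the block shape \eqref{join graph matrix} with parameters meeting the three families of inequalities required there; the equality $\K(R_n/\M_G^{(1)})=\det\widetilde Q_G$ then follows by reading off $\K(R_n/\J_{\widetilde Q_G})=\det\widetilde Q_G$.

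First I would index the $n$ non-root vertices of $G$ in blocks, letting block $i$ consist of the $n_i$ vertices contributed by $G_i$, and then read off the off-diagonal entries of $\widetilde Q_G$, which are exactly the adjacency multiplicities of $G$. For two distinct vertices in the same block $i$, the multiplicity equals $b_i$: this is because $G_i$ arises from the complete multigraph $K_{n_i+1}^{a_i,b_i}$ by deleting only edges through the root, so every non-root adjacency inside $G_i$ still has value $b_i$. (This is exactly the feature that powered Theorem \ref{RC}, and it is why the hypothesis ``removing some edges through the root'' is essential: deleting interior edges would destroy the constant off-diagonal pattern of the diagonal blocks.) For a vertex in block $i$ and a vertex in block $j$ with $i\ne j$, unwinding the nested product $((\cdots(G_1*_{d_1}G_2)\cdots)*_{d_{r-1}}G_r)$ shows that the later block $\max(i,j)$ was joined to all earlier vertices using the parameter $d_{\max(i,j)-1}$, so the cross-block multiplicity is $d_{\max(i,j)-1}$. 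Comparing with \eqref{join graph matrix}, the diagonal block indexed by $i$ has off-diagonal entries $b_i$, matching $H_i$, and the off-diagonal block indexed by $(i,j)$ has all entries $d_{\max(i,j)-1}$, matching $A_{\max(i,j)-1}$.

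Next I would record the diagonal entries: the $(v,v)$ entry of $\widetilde Q_G=D(G)+A(G)$ with the root row and column deleted is the degree $\deg_G(v)$, which I set as $\alpha_{i,j}$ for the $j$-th vertex of block $i$; these are exactly the diagonal entries of the blocks $H_i$. To meet the hypothesis $\alpha_{i,j}\ge b_i$ of Theorem \ref{matrix version of join of graphs} it suffices to note $\widetilde Q_G\in\mathcal G_n$, so $\alpha_{i,j}=h_{vv}\ge\max_{w\ne v}h_{vw}\ge b_i$ whenever $n_i\ge 2$ (take $w$ in the same block); the case $n_i=1$ yields a $1\times 1$ diagonal block, where the condition is vacuous and the only bound used by the proof of Theorem \ref{matrix version of join of graphs} is the degree inequality $\alpha_{i,1}\ge d_{i-1}$, which holds since $v$ is incident to the earlier blocks. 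The remaining hypotheses $b_1\ge d_1$, $b_i\ge d_{i-1}$ for $2\le i\le r$, and $d_i\ge d_{i+1}$ for $1\le i\le r-2$ are immediate from the corollary's assumptions $b_1\ge d_1$, $b_i\ge d_{i-1}$, and the chain $d_1\ge d_2\ge\cdots\ge d_{r-1}$. With $\widetilde Q_G$ now exhibited in the form \eqref{join graph matrix}, Theorem \ref{matrix version of join of graphs} gives $\K(R_n/\J_{\widetilde Q_G})=\det\widetilde Q_G$, and since $\J_{\widetilde Q_G}=\M_G^{(1)}$ this is the claimed equality.

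The main obstacle is purely organizational: correctly identifying the cross-block multiplicity as $d_{\max(i,j)-1}$ from the iterated definition of $*_d$, and thereby aligning it with the somewhat intricate index pattern of the blocks $A_k$ in \eqref{join graph matrix}. Once this dictionary between $G$ and $H$ is set up and the (routine) degenerate single-vertex blocks are checked, there is no further analytic content, as the determinantal identity is inherited wholesale from Theorem \ref{matrix version of join of graphs}.
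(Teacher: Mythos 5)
Your proposal is correct and is essentially the paper's own proof: the paper likewise observes that $\widetilde Q_G$ has the form \eqref{join graph matrix} and applies Theorem \ref{matrix version of join of graphs} with $H=\widetilde Q_G$, using $\J_{\widetilde Q_G}=\M_G^{(1)}$. Your write-up simply makes explicit the dictionary between the graph product and the block structure (including the degenerate $n_i=1$ blocks), which the paper leaves implicit.
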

  \begin{proof}
   The truncated signless Laplacian $\widetilde Q_G$ of the graph $G$ is a matrix of the form \eqref{join graph matrix}. Taking $H=\widetilde Q_G$ in Theorem \ref{matrix version of join of graphs} we get our result. \qed
  \end{proof}
  
  \vspace{4mm}
  \noindent
  By the above corollary, we can deduce that $\K(R/\M_G^{(1)})=\det\Q_G$ for the graph $G=G_1*_1G_2$ in Figure \ref{figure 123143}.
  
  \section{Characterizing subgraphs of $K_{n+1}^{a,1}$}\label{characterizing subgraphs}

In this section we classify all subgraphs $G$ of the complete multigraph $K_{n+1}^{a,1}$ that satisfy $\K(R/\M_G^{(1)})=\det\widetilde Q_G$. In order to prove this we need the following results related to Hermitian matrices and their eigenvalues.

 Let $M\in M_n(\mathbb C)$ be a Hermitian matrix. The  eigenvalues of $M$ are arranged in a non-decreasing order $\lambda_1(M)\leq\lambda_2(M)\leq\cdots\leq\lambda_n(M)$. The Courant-Weyl inequalities \cite[Theorem 4.3.1]{HJ} compare eigenvalues of two Hermitian matrices with the eigenvalues of their sum.

    \begin{theorem}[Courant-Weyl]\label{CW}
    Let $M_1,M_2\in M_n({\mathbb C})$ be Hermitian matrices. Then
    \[
    \lambda_i(M_1+M_2)\leq\lambda_{i+j}(M_1)+\lambda_{n-j}(M_2)~{\rm for}~ j=0,1,\ldots ,n-i.
    \]
    \end{theorem}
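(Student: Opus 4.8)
The plan is to deduce the inequalities from the Courant–Fischer min–max characterization of eigenvalues, which is the variational principle underlying Theorem \ref{CW}. Recall that for a Hermitian matrix $M\in M_n(\mathbb C)$ with eigenvalues ordered as $\lambda_1(M)\le\cdots\le\lambda_n(M)$, the $i$-th eigenvalue can be written as
\[
\lambda_i(M)=\min_{\substack{V\subseteq\mathbb C^n\\ \dim V=i}}\ \max_{\substack{x\in V\\ x\ne 0}}\frac{x^*Mx}{x^*x}.
\]
The strategy is to produce, for each admissible $j$, a single $i$-dimensional subspace on which the Rayleigh quotient of $M_1+M_2$ is controlled by $\lambda_{i+j}(M_1)+\lambda_{n-j}(M_2)$; feeding this subspace into the outer minimum above then yields the claim.

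First I would fix $i$ and $j$ with $0\le j\le n-i$, so that both indices $i+j$ and $n-j$ lie in $\{1,\ldots,n\}$. Let $u_1,\ldots,u_n$ be an orthonormal eigenbasis of $M_1$ with $M_1u_k=\lambda_k(M_1)u_k$, and set $W_1=\operatorname{span}(u_1,\ldots,u_{i+j})$, so $\dim W_1=i+j$. For any $x\in W_1$ one has $x^*M_1x\le\lambda_{i+j}(M_1)\,x^*x$, since $x$ is a combination of eigenvectors whose eigenvalues are all at most $\lambda_{i+j}(M_1)$. Analogously, let $W_2=\operatorname{span}(v_1,\ldots,v_{n-j})$ be spanned by the eigenvectors of $M_2$ for its $n-j$ smallest eigenvalues, so that $\dim W_2=n-j$ and $x^*M_2x\le\lambda_{n-j}(M_2)\,x^*x$ for every $x\in W_2$.

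Next I would carry out the dimension count: $\dim(W_1\cap W_2)\ge\dim W_1+\dim W_2-n=(i+j)+(n-j)-n=i$, so I may choose an $i$-dimensional subspace $V\subseteq W_1\cap W_2$. For every nonzero $x\in V$, additivity of the two Rayleigh bounds gives
\[
\frac{x^*(M_1+M_2)x}{x^*x}=\frac{x^*M_1x}{x^*x}+\frac{x^*M_2x}{x^*x}\le\lambda_{i+j}(M_1)+\lambda_{n-j}(M_2).
\]
Hence the inner maximum over this particular $V$ is at most $\lambda_{i+j}(M_1)+\lambda_{n-j}(M_2)$, and applying the min–max formula for $\lambda_i(M_1+M_2)$, in which $V$ is only one competitor in the minimum, delivers $\lambda_i(M_1+M_2)\le\lambda_{i+j}(M_1)+\lambda_{n-j}(M_2)$.

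The only genuinely delicate point is the bookkeeping: one must use the \emph{min}–max form (subspaces of dimension equal to the eigenvalue index) for both $M_1$ and $M_2$, choosing the subspaces spanned by the \emph{smallest} eigenvalues so that the upper Rayleigh bounds point in the right direction, and then verify that the two dimensions $i+j$ and $n-j$ are exactly calibrated so their intersection still has dimension at least $i$. Once the variational principle is in hand, no serious computation remains; the main obstacle is simply selecting the correct pair of test subspaces and confirming that the Grassmann intersection inequality leaves room for an $i$-dimensional common subspace.
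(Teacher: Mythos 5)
Your proof is correct. The paper itself contains no proof of Theorem \ref{CW} --- it simply quotes the result from \cite[Theorem 4.3.1]{HJ} --- and your argument (the Courant--Fischer min--max principle applied to an $i$-dimensional subspace of $W_1\cap W_2$, whose existence follows from the Grassmann dimension count $(i+j)+(n-j)-n=i$) is exactly the standard proof given in that cited reference, with the index bookkeeping, including the admissibility of $i+j$ and $n-j$, handled correctly.
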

    
The following application of the Courant-Weyl inequalities will be used in our proofs.

    \begin{lemma}\label{replacing an element}
    Let $M=(a_{i,j})_{n\times n}\in M_n(\C)$ be a Hermitian positive semidefinite matrix. Suppose $N$ is obtained from $M$ by replacing one diagonal element, say $a_{i,i}$, with a real number $b$ such that $a_{i,i}\geq b$. If $\det N>0$, then $N$ is a positive definite matrix.
    \end{lemma}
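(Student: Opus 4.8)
The plan is to apply the Courant-Weyl inequalities (Theorem \ref{CW}) to control the eigenvalues of $N$ in terms of those of $M$, and then to use the hypothesis $\det N>0$ to upgrade nonnegativity to strict positivity. First I would observe that $N$ is again Hermitian, since the only modification is replacing the (necessarily real) diagonal entry $a_{i,i}$ of the Hermitian matrix $M$ by the real number $b$; hence $N$ has real eigenvalues, and it suffices to prove that its smallest eigenvalue $\lambda_1(N)$ is positive. Writing $t=a_{i,i}-b\geq 0$ and letting $E$ denote the matrix with a $1$ in the $(i,i)$ entry and $0$ elsewhere, I have the decomposition $M=N+tE$. The eigenvalues of $tE$, arranged non-decreasingly, are $\lambda_1(tE)=\cdots=\lambda_{n-1}(tE)=0$ and $\lambda_n(tE)=t$.

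Next I would apply Theorem \ref{CW} with $M_1=N$ and $M_2=tE$, taking $j=1$, which is permissible for $1\leq i\leq n-1$. This gives $\lambda_i(M)\leq\lambda_{i+1}(N)+\lambda_{n-1}(tE)=\lambda_{i+1}(N)$, so that $\lambda_{i+1}(N)\geq\lambda_i(M)\geq 0$ for $i=1,\ldots,n-1$, where the last inequality uses that $M$ is positive semidefinite. In other words, the $n-1$ largest eigenvalues $\lambda_2(N),\ldots,\lambda_n(N)$ are already nonnegative; the only eigenvalue not yet controlled by this argument is the smallest one, $\lambda_1(N)$.

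The final step, which is the crux, is to bring in $\det N=\prod_{k=1}^{n}\lambda_k(N)>0$. Since this product is nonzero, every factor $\lambda_k(N)$ is nonzero; combined with the nonnegativity just obtained, this forces $\lambda_k(N)>0$ for $k=2,\ldots,n$, and hence $\prod_{k=2}^{n}\lambda_k(N)>0$. Dividing, $\lambda_1(N)=\det N\big/\prod_{k=2}^{n}\lambda_k(N)>0$, so all eigenvalues of $N$ are positive and $N$ is positive definite.

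The main obstacle here is conceptual rather than computational: one must notice that Courant-Weyl alone pins down only the $n-1$ largest eigenvalues of $N$ (leaving the smallest completely free, as it should, since subtracting from a diagonal entry can certainly destroy positive definiteness), and that the determinant hypothesis is precisely what is needed to play a double role — promoting the remaining nonnegative eigenvalues to strictly positive ones and simultaneously fixing the sign of the single uncontrolled eigenvalue $\lambda_1(N)$. I expect the rest, including the verification that $N$ is Hermitian and the eigenvalue bookkeeping for $tE$, to be entirely routine.
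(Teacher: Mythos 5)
Your proof is correct and follows essentially the same route as the paper: the decomposition $M=N+(a_{i,i}-b)\epsilon_{i,i}$, the Courant--Weyl inequality with $j=1$ to show $\lambda_2(N),\ldots,\lambda_n(N)\geq 0$, and the determinant hypothesis to force all eigenvalues positive. Your write-up even spells out the final division step $\lambda_1(N)=\det N\big/\prod_{k=2}^{n}\lambda_k(N)>0$ more explicitly than the paper does.
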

  
    \begin{proof}
    Let $\epsilon_{i,j}$ be the $n\times n$ matrix with $1$ at the $(i,j)^{th}$ place and zero elsewhere. Then $M=N+N'$, where $N'=(a_{i,i}-b)\epsilon_{i,i}$. Clearly, $\lambda_1(N')=\cdots=\lambda_{n-1}(N')=0$ and $\lambda_n(N')=a_{i,i}-b$. Since $M$ is positive semidefinite, $0\leq \lambda_1(M)\leq\cdots\leq\lambda_n(M)$. Taking $i=j=1$ in the Courant-Weyl inequalities for $M=N+N'$, we obtain $\lambda_1(M)\leq\lambda_2(N)+\lambda_{n-1}(N')=\lambda_2(N)$. Thus $0\leq \lambda_2(N)\leq\ldots\leq\lambda_n(N)$. As $\det N=\prod_{i=1}^n\lambda_i(N)>0$, $N$ must be positive definite. \qed
    \end{proof}
    \vspace{4mm}
    Given a block Hermitian matrix $M$ the Fischer's inequality provides an upper bound for $\det M$ in terms of the determinants of its diagonal blocks.

   \begin{theorem}[Fischer]\textup{\cite[Theorem 7.8.5]{HJ}}\label{FI}
    Let $M\in M_n(\mathbb C)$ be a positive semidefinite matrix having block decomposition 
    $M=
    \left(
    \begin{array}{c|c}
    A & B \\

    \hline

    B^* & C
    \end{array}
    \right)
    $ with square matrices $A$ and $C$. Then
    \begin{align}\label{definite to semidefinite}
    \det M\leq\det(A)\det(C).
    \end{align}    
    If $M$ is positive definite then equality occurs in \eqref{definite to semidefinite} if and only if $B=0$.
    \end{theorem}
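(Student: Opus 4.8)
The plan is to prove the inequality via the Schur-complement factorization and then reduce the equality question to a monotonicity property of the determinant on the positive definite cone. First I would reduce to the case $M\succ 0$: if $M$ is only positive semidefinite, I would replace it by $M_\varepsilon=M+\varepsilon I\succ 0$ for $\varepsilon>0$, whose diagonal blocks are $A+\varepsilon I$ and $C+\varepsilon I$; once the inequality is established for positive definite matrices, letting $\varepsilon\to 0^+$ and invoking continuity of the determinant yields $\det M\le\det(A)\det(C)$ in general. Since the equality statement is asserted only in the positive definite case, nothing is lost in this reduction.

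Assume now $M\succ 0$. Then $A$, being a principal submatrix of a positive definite matrix, is itself positive definite and hence invertible, and I would use the standard block factorization
\begin{equation*}
M=\begin{pmatrix} I & 0 \\ B^*A^{-1} & I \end{pmatrix}\begin{pmatrix} A & 0 \\ 0 & S \end{pmatrix}\begin{pmatrix} I & A^{-1}B \\ 0 & I \end{pmatrix},\qquad S=C-B^*A^{-1}B,
\end{equation*}
where $S$ is the Schur complement of $A$ in $M$. Taking determinants (the triangular factors are unipotent) gives $\det M=\det(A)\det(S)$. A standard fact is that $M\succ 0$ holds if and only if $A\succ 0$ and $S\succ 0$; moreover $A^{-1}\succ 0$, so $B^*A^{-1}B\succeq 0$, and therefore $C=S+B^*A^{-1}B$ exhibits $C$ as the sum of a positive definite matrix and a positive semidefinite one.

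The crux is then the monotonicity lemma: if $X\succ 0$ and $P\succeq 0$, then $\det(X+P)\ge\det(X)$, with equality if and only if $P=0$. I would prove this by writing $\det(X+P)=\det(X)\det\bigl(I+X^{-1/2}PX^{-1/2}\bigr)$ and observing that $X^{-1/2}PX^{-1/2}\succeq 0$ has nonnegative eigenvalues $\mu_1,\ldots,\mu_m$, so that $\det(I+X^{-1/2}PX^{-1/2})=\prod_i(1+\mu_i)\ge 1$, with equality exactly when every $\mu_i=0$, i.e. when $P=0$. Applying this with $X=S$ and $P=B^*A^{-1}B$ gives $\det(C)\ge\det(S)$, hence $\det M=\det(A)\det(S)\le\det(A)\det(C)$. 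For the equality case with $M\succ 0$, equality forces $\det(S)=\det(C)$, so $B^*A^{-1}B=0$; writing $B^*A^{-1}B=(A^{-1/2}B)^*(A^{-1/2}B)$ shows this vanishes if and only if $A^{-1/2}B=0$, i.e. if and only if $B=0$, since $A^{-1/2}$ is invertible. The main obstacle I anticipate is the equality analysis: one must carefully track when the monotonicity inequality is tight and verify that the vanishing of the Schur correction $B^*A^{-1}B$ propagates back to $B=0$. The semidefinite reduction plays no role here, which is fortunate, since the perturbation argument would not preserve strict inequalities.
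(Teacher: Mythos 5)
Your proof is correct and complete, but there is nothing in the paper to compare it against: the paper does not prove this statement at all, it simply quotes it from Horn and Johnson \cite[Theorem 7.8.5]{HJ}. Your route—perturb to reduce the inequality to the positive definite case via $M+\varepsilon I$, factor $M$ through the Schur complement $S=C-B^*A^{-1}B$ to get $\det M=\det(A)\det(S)$, then apply the monotonicity $\det(X+P)\ge\det(X)$ for $X\succ 0$, $P\succeq 0$ together with its equality case—is the standard textbook argument, essentially the one given in the cited reference, so in effect you have supplied the proof the paper outsources. All the steps check out: the block factorization is correct, $M\succ 0$ does imply $A\succ 0$ and $S\succ 0$, and the equality analysis correctly propagates $\det(S)=\det(C)$ to $B^*A^{-1}B=0$ and hence to $B=0$ via $B^*A^{-1}B=(A^{-1/2}B)^*(A^{-1/2}B)$. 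Two small points deserve a sentence each for completeness. First, the converse direction of the equality claim: if $B=0$ then $M$ is block diagonal, so $\det M=\det(A)\det(C)$ holds identically; you never state this, though it is immediate. Second, in the equality case of your monotonicity lemma, the passage from ``all $\mu_i=0$'' to $X^{-1/2}PX^{-1/2}=0$ uses that a positive semidefinite (hence Hermitian) matrix whose eigenvalues all vanish is the zero matrix; this is where Hermiticity is genuinely needed, and it is worth making explicit.
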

   
  \vspace{4mm}
   The following Lemma will show us that in order to check whether a graph $G$ satisfy $\K(R/\M_G^{(1)})=\det\Q_G$, we only need to consider those graphs which have exactly two connected components after removing all the rooted-edges. We call these graphs as `maximally connected' graphs (see Definition \ref{defn essentially connected component}). 
\begin{lemma}\label{reducing to essential components}
 Let $G$ be a multigraph on $\{0,1,\ldots ,t\}$ and let $\widetilde G$ be the induced subgraph on the vertex set $[t]$. Suppose the connected components of $\widetilde G$ are $\widetilde G_1,\ldots ,\widetilde G_r$ with $|V_i|=t_i$, where $V_i=V(\widetilde G_i)$ for $1\leq i\leq r$. Consider the induced subgraphs $G_i=G_{V_i\cup\{0\}}$ of $G$. We have, $\K(R_t/\M_G^{(1)})=\det\widetilde Q_G$ if and only if $\K(R_{t_i}/\M_{G_i}^{(1)})=\det\widetilde Q_{G_i}$ for $1\le i\le r$ .
\end{lemma}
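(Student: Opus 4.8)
The plan is to show that \emph{both} quantities $\K(R_t/\M_G^{(1)})$ and $\det\widetilde Q_G$ factor as products over the components $\widetilde G_1,\ldots,\widetilde G_r$, and then to deduce the factorwise equalities from the single global equality by invoking the inequality $\K(R_{t_i}/\M_{G_i}^{(1)})\ge\det\widetilde Q_{G_i}$ of Theorem \ref{inequality for multigraph} together with positivity of the individual determinants. The reverse implication of the lemma will then be immediate.

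First I would establish $\det\widetilde Q_G=\prod_{i=1}^r\det\widetilde Q_{G_i}$. The key observation is that $\widetilde Q_G$ is block diagonal with respect to the partition $[t]=V_1\sqcup\cdots\sqcup V_r$. Indeed, for $u\in V_a$ and $v\in V_b$ with $a\neq b$ the vertices lie in distinct components of $\widetilde G$, so $a_{u,v}=0$ and the corresponding off-diagonal entry of $\widetilde Q_G$ vanishes. Moreover the diagonal block indexed by $V_i$ is exactly $\widetilde Q_{G_i}$: its off-diagonal entries are the adjacencies $a_{u,v}$ inside $\widetilde G_i$, while its diagonal entry at $v\in V_i$ is the full degree $d_v$ of $v$ in $G$, which equals the degree of $v$ in $G_i$ because every neighbour of $v$ lies either in $V_i$ or is the root. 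Hence $\widetilde Q_G$ is the direct sum $\bigoplus_{i=1}^r\widetilde Q_{G_i}$, and the determinant factors.

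Next I would prove $\K(R_t/\M_G^{(1)})=\prod_{i=1}^r\K(R_{t_i}/\M_{G_i}^{(1)})$. Grouping the variables as $X_i=\{x_v:v\in V_i\}$, I claim every minimal generator of $\M_G^{(1)}$ is supported on a single $X_i$. The pure powers $x_v^{d_v}$ already have this property, and a quadratic generator $x_u^{d_u-a_{u,v}}x_v^{d_v-a_{u,v}}$ with $u\in V_a$, $v\in V_b$, $a\neq b$ has $a_{u,v}=0$, so it equals $x_u^{d_u}x_v^{d_v}$ and is divisible by the generator $x_u^{d_u}$; thus all cross-component generators are redundant. Consequently $\M_G^{(1)}=\sum_{i=1}^r\M_{G_i}^{(1)}R_t$, where $\M_{G_i}^{(1)}\subseteq\FK[X_i]$ (the matching of degrees $d_v$ in $G$ and in $G_i$ gives $\J_{\widetilde Q_{G_i}}=\M_{G_i}^{(1)}$). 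A monomial therefore lies outside $\M_G^{(1)}$ if and only if its restriction to each $X_i$ is standard for $\M_{G_i}^{(1)}$, so the standard monomials of $R_t/\M_G^{(1)}$ are precisely the products of standard monomials of the factors, which yields the claimed product of dimensions.

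Finally, Theorem \ref{inequality for multigraph} gives $\K(R_{t_i}/\M_{G_i}^{(1)})\ge\det\widetilde Q_{G_i}$ for each $i$, since every $\widetilde Q_{G_i}$ is positive semidefinite. The hard part will be the forward implication, namely passing from equality of the two products to equality of the individual factors; this step genuinely requires that each factor $\det\widetilde Q_{G_i}$ be \emph{strictly} positive, as a factorwise strict inequality could otherwise be masked by a vanishing factor. This positivity is where I would use that each $G_i$ is connected (it consists of the connected graph $\widetilde G_i$ joined to the root), so $G_i$ possesses a spanning tree and hence $\det\widetilde Q_{G_i}\ge 1$ by the $TU$-subgraph interpretation of the determinant. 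Given $\det\widetilde Q_{G_i}>0$ for all $i$ together with $\K(R_{t_i}/\M_{G_i}^{(1)})\ge\det\widetilde Q_{G_i}$, strict monotonicity of a product of positive reals forces $\K(R_{t_i}/\M_{G_i}^{(1)})=\det\widetilde Q_{G_i}$ for every $i$ whenever the global equality holds, completing the argument.
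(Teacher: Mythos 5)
Your two factorizations and the appeal to Theorem \ref{inequality for multigraph} are exactly the paper's proof: the cross-component quadratic generators are redundant (since $a_{u,v}=0$ across components), so $\M_G^{(1)}=\sum_{i=1}^r\M_{G_i}^{(1)}R_t$ and $\K(R_t/\M_G^{(1)})=\prod_{i=1}^r\K(R_{t_i}/\M_{G_i}^{(1)})$, while $\widetilde Q_G$ is block diagonal, so $\det\widetilde Q_G=\prod_{i=1}^r\det\widetilde Q_{G_i}$; the backward implication is then immediate. You go beyond the paper in one respect: you notice that deducing the factorwise equalities from the global equality requires the factors $\det\widetilde Q_{G_i}$ to be strictly positive, since a vanishing factor could mask a strict inequality elsewhere. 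The paper's proof simply asserts the ``if and only if'' at that point and never addresses the vanishing case.

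However, your justification of positivity is false, and this is a genuine gap. You claim each $G_i$ is connected because it is ``$\widetilde G_i$ joined to the root''; in fact $G_i=G_{V_i\cup\{0\}}$ is an induced subgraph, and $G$ may contain no edge at all between $V_i$ and the root --- the paper emphasizes precisely this in the remark following Definition \ref{defn essentially connected component} (an essentially connected graph need not be connected, e.g.\ $K_3$ with all rooted edges deleted). For such a component there is no spanning tree, $\widetilde Q_{G_i}$ is the full signless Laplacian of $\widetilde G_i$, and its determinant vanishes whenever $\widetilde G_i$ is bipartite. Worse, the gap cannot be closed, because the forward implication is genuinely false in this degenerate situation: take $G$ on $\{0,1,\ldots,5\}$ with edges $\{1,2\}$, $\{3,4\}$, $\{4,5\}$ and the root isolated, so the essentially connected components are $G_1$ (the edge $1$--$2$ with isolated root) and $G_2$ (the path $3$--$4$--$5$ with isolated root). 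Here $m_{\{1,2\}}=x_1^0x_2^0=1\in\M_G^{(1)}$, hence $\K(R_5/\M_G^{(1)})=0$, and $\det\widetilde Q_G=\det\widetilde Q_{G_1}\cdot\det\widetilde Q_{G_2}=0\cdot 0=0$, so the global equality holds; but $\M_{G_2}^{(1)}=\langle x_3,x_4,x_5\rangle$ gives $\K(R_3/\M_{G_2}^{(1)})=1>0=\det\widetilde Q_{G_2}$, so the componentwise equality fails. Thus the statement itself (and the paper's proof of it) breaks down when some $\det\widetilde Q_{G_i}=0$; your argument, like the paper's, is valid exactly under the additional hypothesis that every $\det\widetilde Q_{G_i}>0$ (equivalently $\det\widetilde Q_G\neq 0$), in which case your product argument for the forward direction goes through verbatim.
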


\begin{proof}
We observe that the $1$-skeleton ideal $\M_G^{(1)}=\sum_{i=1}^r\M_{G_i}^{(1)}$ such that

\[\K\left(\frac{R}{\M_{G}^{(1)}}\right)=\prod_{i=1}^r\K\left(\frac{R_{t_i}}{\M_{G_i}^{(1)}}\right).\]
Also, the truncated signless Laplacian $\widetilde Q_G$ is a block-diagonal matrix with each block corresponds to the truncated signless Laplacian of $G_i$. Therefore, $\det\widetilde Q_G=\prod_{i=1}^r\det\widetilde Q_{G_i}$. By Theorem \ref{inequality for multigraph}, $\K(R_{t_i}/\M_{G_i}^{(1)})\ge\det\widetilde Q_{G_i}$ for $1\le i\le r$. Moreover, $\K(R_{t_i}/\M_{G_i}^{(1)})\ge 1$ and $\det\widetilde Q_{G_i}\ge 0$ for $1\le i\le r$. Hence, $\K(R_t/\M_G^{(1)})=\det\widetilde Q_G$ if and only if $\K(R_{t_i}/\M_{G_i}^{(1)})=\det\widetilde Q_{G_i}$ for $1\le i\le r$. \qed
\end{proof}
\vspace{4mm}
We illustrate the above result by the following example.
\begin{example}
Let $G$ be the graph in Figure \ref{figure 123465} on the vertex set $\{0,1,\ldots,5\}$.
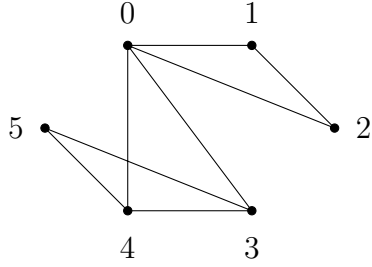
\begin{figure}[h!]
\centering
\begin{tikzpicture}
[scale=.55]
\draw [fill] (0,0) circle [radius=0.1];
\draw [fill] (3,0) circle [radius=0.1];
\draw [fill] (5,2) circle [radius=0.1];
\draw [fill] (3,4) circle [radius=0.1];
\draw [fill] (0,4) circle [radius=0.1];
\draw [fill] (-2,2) circle [radius=0.1];
\node at (0,-0.9) {$4$};
\node at (3,-0.9) {$3$};
\node at (5.7,2) {$2$};
\node at (3,4.8) {$1$};
\node at (0,4.8) {$0$};
\node at (-2.7,2) {$5$};
\draw (0,4)--(0,0)--(3,0)--(-2,2)--(0,0);
\draw (3,0)--(0,4)--(5,2)--(3,4)--(0,4);
\end{tikzpicture}\caption{A graph on vertex set $\{0\}\cup[5]$.}\label{figure 123465}
\end{figure}
  Let $V_1=\{1,2\}$ and $V_2=\{3,4,5\}$. Consider the induced subgraphs $G_i:=G_{V_i\cup\{0\}}$ for $i=1,2$. Identifying the vertex $i$ with the variable $x_i$ in the polynomial ring $R=\FK[x_1,\ldots,x_5]$ we see that $\M_G^{(1)}=\M_{G_1}^{(1)}+\M_{G_2}^{(1)}$, where $\M_{G_1}^{(1)}=\langle x_1^2,x_2^2,x_1x_2\rangle$ and $\M_{G_2}^{(1)}=\langle x_3^3,x_4^3,x_5^2,x_3^2x_4^2,x_3^2x_5,x_4^2x_5\rangle$. Moreover, $\K(R/\M_G^{(1)})=36=3\cdot 12=\K(R_1/\M_{G_1}^{(1)})\cdot\K(R_2/\M_{G_2}^{(1)})$, where $R_1=\FK[x_1,x_2]$ and $R_2=\FK[x_3,x_4,x_5]$. We also have $\K(R_i/\M_{G_i}^{(1)})=\det\Q_{G_i}$ for each $i=1,2$. Thus $\K(R/\M_{G}^{(1)})=\det\Q_{G}$.
  
\end{example}
The graphs $G_1$ and $G_2$ in above example are called {\it maximally connected subgraphs} of $G$. To be more precise, we have the following.
\begin{definition}\label{defn essentially connected component}
 Let $G$ be a multigraph on the vertex set $\{0,1,\ldots ,t\}$ and let $\widetilde{G}$ be the induced subgraph of $G$ on the vertex set $\{1,\ldots ,t\}$. If $\widetilde G_1$ is a connected component of $\widetilde G$ with vertex set $V_1$, then we call the induced subgraph $G_1=G_{V_1\cup\{0\}}$ of $G$ a maximally connected subgraph of $G$. Moreover, if $\widetilde{G}$ is connected, we say that $G$ is a maximally connected graph.
\end{definition}

Thus a graph is maximally connected if the only maximally connected subgraph of $G$ is itself.

\begin{remark}
A maximally connected graph may not be connected. For example, let $G$ be the simple graph on the vertex set $\{0,1,2\}$ obtained from $K_3$ by removing all the edges attached to the root $0$. Clearly, $G$ is maximally connected but not connected. Note that, a maximally connected graph $G$ is connected if and only if $G$ has at least one edge attached to the root $0$. 
\end{remark}
We now proceed to prove our main result in this section. For this we need to understand how the two quantities, determinant of a positive semidefinite matrix $H\in\mathcal{G}_n$ and the number of standard monomials of the ideal $\J_H$, are related.
\begin{discussion}\label{discussion 2.9}\normalfont
 Notice that by Lemma \ref{reducing to essential components}, to check whether for a multigraph $G$ the equality $\K(R/\M_G^{(1)})=\det\widetilde Q_G$ holds or not, it is enough to check this for its maximally connected subgraphs. Thus, without loss of generality we may assume that $G$ is a maximally connected multigraph on the vertex set $\{0,1,\ldots ,t\}$. Consider the induced subgraph $\widetilde G=G_{\{1,\ldots ,t\}}$.
Find two vertices in $\widetilde G$ such that the number of edges between them is maximum among the number of edges between any pair of vertices of $\widetilde G$. Rename these two vertices as $1$ and $2$. Suppose there are $b$ number of edges between them, where $b\ge 1$. Now find some $i\in V(\widetilde G)$, if exists, such that $i\notin\{1,2\}$ and there are $b$ number of edges from $i$ to both the vertices $1$ and $2$. Rename the vertex $i$ as $3$ and continue this way to find a maximal {\em clique} (an induced subgraph which is also a complete multigraph) on the vertex set (say) $\{1,2,\ldots ,n\}$ having $b$ number of edges between any two vertices. Then the truncated signless Laplacian of $G$ will be of the form
\begin{align}\label{graph to matrix}
 H:=\widetilde Q_G=\begin{bmatrix}
                    \alpha_1 & \cdots & b & b & d_{1,1} & d_{1,2} & \cdots & d_{1,m} \\
                    \vdots & \ddots & \vdots & \vdots & \vdots & \vdots & \ddots & \vdots \\
                    b & \cdots & \alpha_{n-1} & b & d_{n-1,1} & d_{n-1,2} & \cdots & d_{n-1,m} \\
                    b & \cdots & b & \alpha_n & d_{n,1} & d_{n,2} & \cdots & d_{n,m} \\
                    d_{1,1} & \cdots & d_{n-1,1} & d_{n,1} & \beta_1 & c_{1,2} & \cdots & c_{1,m} \\
                    d_{1,2} & \cdots & d_{n-1,2} & d_{n,2} & c_{1,2} & \beta_2 & \cdots & c_{2,m} \\
                    \vdots & \ddots & \vdots & \vdots & \vdots & \vdots & \ddots & \vdots \\
                    d_{1,m} & \cdots & d_{n-1,m} & d_{n,m} & c_{1,m} & c_{2,m} & \cdots & \beta_m
                   \end{bmatrix}_{t\times t},
\end{align}
where $n+m=t$ with $n\geq 2$. If $t=n$, then $\K(R_n/\M_G^{(1)})=\det\widetilde Q_G$ by Theorem \ref{rooted edge matrix version}. Therefore, we may assume that $m\geq 1$. Note that if $\alpha_i=\alpha_j=b$ for some $i\neq j$, then the two vertices $i$ and $j$ will form a connected component of $\widetilde G$ and hence $G$ will not be a maximally connected graph. Similarly, for $i\neq j$ we cannot have $\beta_i=c_{i,j}=\beta_j$ or $\alpha_i=d_{i,j}=\beta_j$. Also, if for some $1\leq j\leq m$, $d_{i,j}=b$ for each $1\leq i\leq n$, then the set of vertices $\{1,\ldots ,n\}$ will not form a maximal clique. Therfore, the matrix $H$ given in \eqref{graph to matrix} satisfies the following conditions.

%{\small
\begin{align}
 &\text{For each}~ i\in[n]~\text{and}~j\in[m],~\alpha_i\geq b\geq 1,~ b\geq d_{i,j}\geq 0,~\beta_j\geq 1,~\text{and}~\beta_j\geq d_{i,j}.\label{equation 2.4a}\\[0.5em]
 &\text{For each}~ i,k\in[m],\text{and}~r<i<j,~\beta_i\geq c_{i,j},~\beta_i\geq c_{r,i}~\text{and}~b\geq c_{i,k}.\label{equation 2.4b}\\[0.5em]
 &\text{For each}~ i,j\in[n] ~\text{and}~ i\neq j,\,\text{either}~ \alpha_i>b ~\text{or}~ \alpha_j>b.\label{equation 2.4c}\\[0.5em]
 &\text{For each}~ i,j\in[m] ~\text{and}~ i\neq j,\,\text{either}~ \beta_i>c_{i,j} ~\text{or}~ \beta_j>c_{i,j}.\label{equation 2.4d}\\[0.5em]
 &\text{For each}~ i\in[n] ~\text{and}~ j\in[m],\,\text{either}~ \alpha_i>d_{i,j}~ \text{or}~ \beta_j>d_{i,j}.\label{equation 2.4e}\\[0.5em]
  &\text{For each}~j\in[m], ~\text{there exists some}~i\in[n] ~\text{such that}~ b\neq d_{i,j}.\label{equation 2.4f}
\end{align}
\end{discussion}

Thus we have the following lemma.

\begin{lemma}
Let $G$ be a maximally connected multigraph on the vertex set $\{0,1,\ldots,t\}$. If $G$ is not obtained from a complete multigraph by deleting only rooted edges then the signless Laplacian $\Q_G$ of $G$ is given by the equation \eqref{graph to matrix} and the entries of $\Q_G$ satisfies the equations \eqref{equation 2.4a} to \eqref{equation 2.4f}.
\end{lemma}
\begin{proof}
 The proof follows by proceeding as in Discussion \ref{discussion 2.9}.
\end{proof}

 Our aim is to prove the following theorem. The proof uses Fischer's inequality (Theorem \ref{FI}) and Theorem \ref{inequality for multigraph}.
\begin{theorem}\label{required for simple graphs}
 Consider the matrix $H$ given in \eqref{graph to matrix}. Assume that $H$ satisfies the conditions \eqref{equation 2.4a} to \eqref{equation 2.4f}. Suppose $H$ is a positive semidefinite matrix. If $\K(R_t/\J_H)=\det H$, then for each $1\leq l\leq m$ we have $d_{r,l}=d_{s,l}$ for $1\leq r,s\leq n$.
\end{theorem}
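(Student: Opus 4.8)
The plan is to induct on the size $n$ of the maximal clique and, at each step, peel off one clique vertex through the short exact sequence \eqref{short exact sequence}, comparing the resulting recursion for $\K(R_t/\J_H)$ with the multilinear expansion of $\det H$. Write $H=\begin{pmatrix}A & D\\ D^t & C\end{pmatrix}$, where $A$ is the $n\times n$ clique block (diagonal $\alpha_i$, off-diagonal $b$), $D=(d_{i,l})$, and $C$ is the external block. By \eqref{equation 2.4c} I may relabel so that $\alpha_1>b$ and set $r=\alpha_1-b>0$. A direct computation of the colon ideal shows that $(\J_H:x_1^{r})$ splits off the pure powers $x_2^{\alpha_2-b},\dots,x_n^{\alpha_n-b}$ from a copy of $\J_{H'}$, where $H'$ is the $(m+1)\times(m+1)$ matrix obtained by collapsing the clique to the single vertex $1$ with diagonal lowered to $b$ (retaining the entries $d_{1,l}$ and the block $C$); likewise $\langle\J_H,x_1^{r}\rangle=\langle\J_{H_2},x_1^{r}\rangle$, with $H_2$ the principal submatrix of $H$ obtained by deleting vertex $1$. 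The sequence \eqref{short exact sequence} then yields the exact identity
\[
\K\!\left(\frac{R_t}{\J_H}\right)=\det\Lambda\cdot\K\!\left(\frac{R_{m+1}}{\J_{H'}}\right)+r\,\K\!\left(\frac{R_{t-1}}{\J_{H_2}}\right),\qquad \Lambda:=\operatorname{diag}(\alpha_2-b,\dots,\alpha_n-b).
\]

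On the determinant side, multilinearity in the first column (writing $\alpha_1=b+r$) gives $\det H=r\det H_2+\det\widetilde H$, where $\widetilde H$ is $H$ with its $(1,1)$ entry replaced by $b$. Performing the congruence $\mathcal C_j-\mathcal C_1,\ \mathcal R_j-\mathcal R_1$ for $2\le j\le n$ on $\widetilde H$ decouples the clique coordinates $2,\dots,n$ and produces the bordered matrix $\widetilde H'=\begin{pmatrix}\Lambda & E\\ E^t & H'\end{pmatrix}$, whose off-diagonal block $E$ has precisely the entries $d_{i,l}-d_{1,l}$ for $2\le i\le n$. Since $\det\widetilde H=\det\widetilde H'$ and $\det\Lambda=\prod_{j\ge2}(\alpha_j-b)$, subtracting the two expansions yields the defect decomposition
\[
\K\!\left(\frac{R_t}{\J_H}\right)-\det H=\det\Lambda\cdot\delta(H')+r\,\delta(H_2)+\bigl(\det\Lambda\,\det H'-\det\widetilde H'\bigr),
\]
where $\delta(\,\cdot\,)$ denotes the excess $\K(R/\J_{(\cdot)})-\det(\cdot)$, nonnegative by Theorem \ref{inequality for multigraph}.

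The crux is that each of the three summands is nonnegative and the last one detects exactly the non-constancy of the columns of $D$. Indeed $r\,\delta(H_2)\ge0$ because $H_2$ is a principal submatrix of the positive semidefinite $H$, hence itself positive semidefinite; $\det\Lambda\ge0$; and $\delta(H')\ge0$ (if $\det H'>0$, Lemma \ref{replacing an element} upgrades $H'$—a diagonal lowering of a principal submatrix of $H$—to positive definite, so Theorem \ref{inequality for multigraph} applies, while if $\det H'\le0$ the bound is trivial). For the last summand I apply Fischer's inequality (Theorem \ref{FI}) to $\widetilde H'$, giving $\det\widetilde H'\le\det\Lambda\det H'$, with equality precisely when $E=0$ once $\widetilde H'$ is known to be positive definite. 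Since the hypothesis $\K(R_t/\J_H)=\det H$ forces every summand to vanish, in particular $\det\Lambda\det H'-\det\widetilde H'=0$, whence $E=0$; that is, $d_{i,l}=d_{1,l}$ for all $i$ and $l$, which is exactly the assertion $d_{r,l}=d_{s,l}$ (the induction hypothesis on $H_2$ is used only to close the degenerate cases below).

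The hard part is the positive-(semi)definiteness bookkeeping, since $H'$ and $\widetilde H'$ are obtained by lowering a diagonal entry of $H$ and so are not principal submatrices and need not remain positive semidefinite. Lemma \ref{replacing an element} is the device that resolves this: whenever the relevant determinant is positive, the lowered matrix is in fact positive definite, which is exactly what the equality clause of Fischer's inequality demands. The boundary cases where these determinants vanish, or where $\Lambda$ is singular because some $\alpha_j=b$ (possible for at most one clique vertex, by \eqref{equation 2.4c}), must be handled separately: there one invokes that a positive semidefinite matrix with a zero diagonal entry has the corresponding row and column zero, together with the trivial bound $\K\ge0$, to force $\det\widetilde H'\le0$ and so recover $E=0$ directly. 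Checking that $H'$ and $H_2$ continue to satisfy the structural hypotheses \eqref{equation 2.4a}–\eqref{equation 2.4f} required to run the induction is routine.
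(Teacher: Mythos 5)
Your generic case is sound, and it is essentially the paper's own core computation (the paper peels the clique vertex $n$ rather than $1$, forms the same colon and sum ideals, and runs the same comparison via Fischer's inequality, Lemma \ref{replacing an element} and Theorem \ref{inequality for multigraph}). The genuine gap is in the degenerate cases, which is where the paper does most of its work. Your whole argument closes only when $\widetilde H'$ is positive definite, i.e.\ (via Lemma \ref{replacing an element}) only when $\det\widetilde H'>0$. Under the hypothesis $\K(R_t/\J_H)=\det H$ your decomposition rearranges to $\det\widetilde H'=\det\Lambda\cdot\K(R_{m+1}/\J_{H'})+r\,\delta(H_2)$, and nothing in \eqref{equation 2.4a}--\eqref{equation 2.4f} prevents the right side from being $0$: this happens when $1\in\J_{H'}$, i.e.\ $b=d_{1,j}=\beta_j$ for some $j$ (so $\K(R_{m+1}/\J_{H'})=0$), or when some $\alpha_j=b$ with $j\ge 2$ (so $\det\Lambda=0$), together with $\delta(H_2)=0$. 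The first configuration is not excluded: \eqref{equation 2.4f} only guarantees that \emph{some} clique vertex $i$ has $d_{i,j}\neq b$, not your chosen vertex $1$, and \eqref{equation 2.4e} is automatic for $i=1$ since $\alpha_1>b\geq d_{1,j}$. For instance $\alpha_1=3,\ \alpha_2=2,\ b=1,\ d_{1,1}=1,\ d_{2,1}=0,\ \beta_1=1$ gives a positive definite $H$ satisfying all six conditions with $1\in\J_{H'}$. In such configurations all three summands can vanish with $E\neq 0$ as far as your identity can tell, since Fischer's equality clause (Theorem \ref{FI}) is simply unavailable.

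Your proposed repair of these boundary cases is a non sequitur: the matrix carrying the zero diagonal entry is $\widetilde H'$ (or $\Lambda$ inside it), and $\widetilde H'$ is exactly the matrix you cannot assume positive semidefinite --- it is a diagonal lowering of $H$, the difficulty you yourself flagged --- so ``psd with a zero diagonal entry has a zero row'' has nothing to act on, and ``$\det\widetilde H'\le 0$'' does not yield $E=0$. Induction on $H_2$ does not close them either: its conclusion says nothing about the entries $d_{1,l}$, and $H_2$ can fail \eqref{equation 2.4f} (the witness vertex may have been $1$), so the ``routine'' verification of the structural hypotheses actually breaks. This is precisely why the paper, arguing by contradiction from a non-constant column of $D$, inserts two dedicated preliminary steps before the exact-sequence computation: first it shows $\alpha_i>b$ for \emph{all} $i$, using that $\alpha_n=b$ forces $\J_H=\J_{H_1}$ for a block-diagonal $H_1$ outright (no exact sequence) and applying strict Fischer to a congruent form of $H$ itself, which is positive definite because $\det H=\K(R_t/\J_H)>0$; second it rules out $\beta_j=b=d_{n,j}$ by a similar separate argument (there $x_n^{\alpha_n-b}\in\J_H$ collapses the ideal). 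Only after both exclusions is $\det\widetilde H'>0$ guaranteed, and only then does the Fischer step force $E=0$. As written, your proof is missing these two exclusions, and that is a genuine gap, not a routine boundary check.
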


\begin{proof}
 For a contradiction assume that $d_{r,l}\neq d_{s,l}$ for some $l$. Without loss of generality, assume that $l=1$, i.e., $d_{r,1}\neq d_{s,1}$ for some $1\leq r<s\leq n$. We first show that
\begin{equation}\label{equation 2.5}
\alpha_i>b~\text{for all}~i\in[n].
\end{equation}
 If possible, let $\alpha_i=b$ for some $i\in[n]$. Without loss of generality, let $\alpha_n=b$. Then, $\alpha_i>b$ for each $i<n$ (by the condition \eqref{equation 2.4c}). Moreover, the ideal $\J_H$ is generated by the following monomials
 \begin{align*}
  x_i^{\alpha_i-b},x_n^b,y_j^{\beta_j}&:~1\leq i<n,~j\in[m], \\
  x_n^{b-d_{n,j}}y_j^{\beta_j-d_{n,j}}&:~j\in[m], \\
  y_i^{\beta_i-c_{i,j}}y_j^{\beta_j-c_{i,j}}&:~1\leq i<j\leq m.
 \end{align*}
 Consider the block-diagonal matrix $H_1=
\left(
\begin{array}{c|c}
D_{(n-1)\times (n-1)} &  \\

\hline

 & A_{(m+1)\times(m+1)}
\end{array}
\right)$,
 where the matrix $D=\operatorname{diag}[\alpha_1-b,\alpha_2-b,\ldots ,\alpha_{n-1}-b]$ is diagonal and the matrix $A$ is obtained from $H$ by deleting the rows and columns $\mathcal{R}_1, \mathcal{C}_1, \dots ,\mathcal{R}_{n-1}, \mathcal{C}_{n-1}$. We see that the ideal $\J_{H_1}=\J_H$. We also have $\K(R_t/\J_H)>0$ because of the conditions \eqref{equation 2.4a} and, \eqref{equation 2.4c} to \eqref{equation 2.4e}. Thus $\det H>0$ and hence, $H$ is a positive definite matrix. Applying elementary column and row operations $\mathcal{C}_1-\mathcal{C}_n,\mathcal{R}_1-\mathcal{R}_n,\ldots,\mathcal{C}_{n-1}-\mathcal{C}_n,\mathcal{R}_{n-1}-\mathcal{R}_n$ on $H$ we see that the reduced matrix
\begin{align*}
 \begin{bmatrix}
                    \alpha_1-b & \cdots & 0 & 0 & d_{1,1}-d_{n,1} & d_{1,2}-d_{n,2} & \cdots & d_{1,m}-d_{n,m} \\
                    \vdots & \ddots & \vdots & \vdots & \vdots & \vdots & \ddots & \vdots \\
                    0 & \cdots & \alpha_{n-1}-b & 0 & d_{n-1,1}-d_{n,1} & d_{n-1,2}-d_{n,2} & \cdots & d_{n-1,m}-d_{n,m} \\
                    0 & \cdots & 0 & b & d_{n,1} & d_{n,2} & \cdots & d_{n,m} \\
                    d_{1,1}-d_{n,1} & \cdots & d_{n-1,1}-d_{n,1} & d_{n,1} & \beta_1 & c_{1,2} & \cdots & c_{1,m} \\
                    d_{1,2}-d_{n,2} & \cdots & d_{n-1,2}-d_{n,2} & d_{n,2} & c_{1,2} & \beta_2 & \cdots & c_{2,m} \\
                    \vdots & \ddots & \vdots & \vdots & \vdots & \vdots & \ddots & \vdots \\
                    d_{1,m}-d_{n,m} & \cdots & d_{n-1,m}-d_{n,m} & d_{n,m} & c_{1,m} & c_{2,m} & \cdots & \beta_m
                   \end{bmatrix}
\end{align*}
is a positive definite matrix. By our assumption $d_{r,1}\neq d_{s,1}$ for some $1\leq r<s\leq n$, we see that $\det H_1>\det H$ by Fischer's inequality. The matrix $A$ is also positive definite, since $H$ is positive definite. Hence, $H_1$ is a positive definite matrix because $\alpha_i> b$ for each $i<n$. By Theorem \ref{inequality for multigraph}, $\K(R_t/\J_{H_1})\geq\det H_1$. Thus $\K(R_t/\J_H)=\K(R_t/\J_{H_1})\geq\det H_1>\det H$, a contradiction. Therefore, the matrix $H$ given in \eqref{graph to matrix} satisfies the condition \eqref{equation 2.5}.
  
Next we claim the following.
\begin{equation}\label{equation 2.6}
\hspace{-4.5cm}\text{For each}~j\in[m],~\text{either} ~\beta_j>d_{n,j}~\text{or}~b> d_{n,j}.
\end{equation}
  On the contrary, let $\beta_j=b=d_{n,j}$ for some $j\in[m]$. Without loss of generality, assume that $j=1$, i.e., $\beta_1=b=d_{n,1}$. Then, the ideal $\J_H$ is generated by the following monomials
\begin{align*}
 x_i^{\alpha_i},x_n^{\alpha_n-b},y_j^{\beta_j}:&~1\leq i< n,~j\in[m],\\
 x_i^{\alpha_i-b}x_j^{\alpha_j-b}:&~1\leq i<j< n,\\
 x_i^{\alpha_i-d_{i,j}}y_j^{\beta_j-d_{i,j}}:&~1\leq i< n,~j\in[m],\\
 y_i^{\beta_i-c_{i,j}}y_j^{\beta_j-c_{i,j}}:&~1\leq i<j\leq m.
\end{align*}
  Let $H_2$ be the matrix obtained from $H$ by replacing the diagonal element $\alpha_n$ with $\alpha_n-b$ and all other element of $\mathcal{R}_n$ and $\mathcal{C}_n$ with zero. %, i.e.,
  %\begin{align*}
 %H_2=\begin{bmatrix}
%                    \alpha_1 & \cdots & b & 0 & d_{1,1} & d_{1,2} & \cdots & d_{1,m} \\
%                    \vdots & \ddots & \vdots & \vdots & \vdots & \vdots & \ddots & \vdots \\
%                    b & \cdots & \alpha_{n-1} & 0 & d_{n-1,1} & d_{n-1,2} & \cdots & d_{n-1,m} \\
 %                   0 & \cdots & 0 & \alpha_n-b & 0 & 0 & \cdots & 0 \\
  %                  d_{1,1} & \cdots & d_{n-1,1} & 0 & b & c_{1,2} & \cdots & c_{1,m} \\
   %                 d_{1,2} & \cdots & d_{n-1,2} & 0 & c_{1,2} & \beta_2 & \cdots & c_{2,m} \\
    %                \vdots & \ddots & \vdots & \vdots & \vdots & \vdots & \ddots & \vdots \\
     %               d_{1,m} & \cdots & d_{n-1,m} & 0 & c_{1,m} & c_{2,m} & \cdots & \beta_m
      %             \end{bmatrix}_{t\times t}.
%\end{align*}
   Then, $\J_H=\J_{H_2}$. We have $\K(R_t/\J_H)>0$ because of the conditions \eqref{equation 2.4a}, \eqref{equation 2.4c} to \eqref{equation 2.4e}. Thus $\det H>0$ and hence, $H$ is a positive definite matrix. Applying elementary column and row operations $\mathcal{C}_n-\mathcal{C}_{n+1}$ and $\mathcal{R}_n-\mathcal{R}_{n+1}$ on $H$ we see that the reduced matrix
  \begin{align*}
  \begin{bmatrix}
                    \alpha_1 & \cdots & b & b-d_{1,1} & d_{1,1} & d_{1,2} & \cdots & d_{1,m} \\
                    \vdots & \ddots & \vdots & \vdots & \vdots & \vdots & \ddots & \vdots \\
                    b & \cdots & \alpha_{n-1} & b-d_{n-1,1} & d_{n-1,1} & d_{n-1,2} & \cdots & d_{n-1,m} \\
                    b-d_{1,1} & \cdots & b-d_{n-1,1} & \alpha_n-b & 0 & d_{n,2}-c_{1,2} & \cdots & d_{n,m}-c_{1,m} \\
                    d_{1,1} & \cdots & d_{n-1,1} & 0 & b & c_{1,2} & \cdots & c_{1,m} \\
                    d_{1,2} & \cdots & d_{n-1,2} & d_{n,2}-c_{1,2} & c_{1,2} & \beta_2 & \cdots & c_{2,m} \\
                    \vdots & \ddots & \vdots & \vdots & \vdots & \vdots & \ddots & \vdots \\
                    d_{1,m} & \cdots & d_{n-1,m} & d_{n,m}-c_{1,m} & c_{1,m} & c_{2,m} & \cdots & \beta_m
                   \end{bmatrix}_{t\times t}
\end{align*}
  is a positive definite matrix. Because of the condition \eqref{equation 2.4f} we have $b\neq d_{i,1}$, for some $1\leq i<n$. Therefore, $\det H<\det H_2$ by Fischer's inequality. Since $\alpha_n> b$ and $H$ is a positive definite matrix, $H_2$ is also positive definite. By Theorem \ref{inequality for multigraph}, $\K(R_t/\J_{H_2})\geq\det H_2$. Thus $\K(R_t/\J_H)>\det H$, a contradiction. Hence, the matrix $H$ given in \eqref{graph to matrix} also satisfies condition \eqref{equation 2.6}.

  Now, $H$ is a positive semidefinite matrix given in \eqref{graph to matrix} which satisfies the conditions \eqref{equation 2.4a} to \eqref{equation 2.6}. The ideal $\J_H$ is generated by the monomials
\begin{align*}
 x_i^{\alpha_i},y_j^{\beta_j}&:~i\in[n],\, j\in[m],\\
 x_i^{\alpha_i-b}x_j^{\alpha_j-b}&:~1\leq i<j\leq n,\\
 x_i^{\alpha_i-d_{i,j}}y_j^{\beta_j-d_{i,j}}&:~i\in[n],\, j\in[m],\\
 y_i^{\beta_i-c_{i,j}}y_j^{\beta_j-c_{i,j}}&:~1\leq i<j\leq m.
\end{align*}
Consider the block-diagonal matrix $H_3=
\left(
\begin{array}{c|c}
\widehat D_{(n-1)\times (n-1)} &  \\

\hline

 & \widehat A_{(m+1)\times(m+1)}
\end{array}
\right)$, where the matrix $\widehat D=\operatorname{diag}[\alpha_1-b,\alpha_2-b,\ldots ,\alpha_{n-1}-b]$ is diagonal and the matrix $\widehat A$ is obtained from $H$ by deleting the rows and columns $\mathcal{R}_1, \mathcal{C}_1, \dots ,\mathcal{R}_{n-1}, \mathcal{C}_{n-1}$ and then replacing the element $\alpha_n$ with $b$.
  We see that $(\J_H:x_n^{\alpha_n-b})=\J_{H_3}$. Let $H_4$ be the matrix obtained from $H$ by replacing the diagonal element $\alpha_n$ with $\alpha_n-b$ and every other elements $\mathcal{R}_n$ and $\mathcal{C}_n$ with zero, i.e., 
   \begin{align*}
  H_4=\begin{bmatrix}
                     \alpha_1 & \cdots & b & 0 & d_{1,1} & d_{1,2} & \cdots & d_{1,m} \\
                     \vdots & \ddots & \vdots & \vdots & \vdots & \vdots & \ddots & \vdots \\
                     b & \cdots & \alpha_{n-1} & 0 & d_{n-1,1} & d_{n-1,2} & \cdots & d_{n-1,m} \\
                     0 & \cdots & 0 & \alpha_n-b & 0 & 0 & \cdots & 0 \\
                     d_{1,1} & \cdots & d_{n-1,1} & 0 & \beta_1 & c_{1,2} & \cdots & c_{1,m} \\
                     d_{1,2} & \cdots & d_{n-1,2} & 0 & c_{1,2} & \beta_2 & \cdots & c_{2,m} \\
                     \vdots & \ddots & \vdots & \vdots & \vdots & \vdots & \ddots & \vdots \\
                     d_{1,m} & \cdots & d_{n-1,m} & 0 & c_{1,m} & c_{2,m} & \cdots & \beta_m
                    \end{bmatrix}_{t\times t}.
 \end{align*}
  We have $\J_{H_4}=\left\langle \J_H,x_n^{\alpha_n-b} \right\rangle$. Now, from the short exact sequence of $\mathbb{K}$-vector spaces,
 \begin{align*}%\label{short exact sequence}
 0\rightarrow\frac{R_t}{\left(\J_H:x_{n}^{\alpha_n-b}\right)}\xrightarrow{\mu_{x_{n}^{\alpha_n-b}}}\frac{R_t}{\J_H}\xrightarrow{\nu}\frac{R_t}{\left\langle \J_H,x_{n}^{\alpha_n-b}\right\rangle}\rightarrow 0,
\end{align*}
  we have $\K(R_t/\J_H)=\K(R_t/\J_{H_3})+\K(R_t/\J_{H_4})$. Here $\mu_{x_{n}^{\alpha_n-b}}$ is the map induced by multiplication by $x_{n}^{\alpha_n-b}$ and $\nu$ is the natural quotient map. Now writing $\alpha_n=(\alpha_n-b)+b$ and using the additivity property of the determinant, we get $\det H=\det H_4+\det B$, where $B$ is the matrix $H$ except the element $\alpha_n$ is replaced with $b$. The matrix $H_4$ is positive semidefinite since $H$ is positive semidefinite and $\alpha_n> b$. By Theorem \ref{inequality for multigraph}, $\K(R_t/\J_{H_4})\geq\det H_4$. We also have $\K(R_t/\J_{H_3})>0$ because of the conditions \eqref{equation 2.4a}, \eqref{equation 2.4d}, \eqref{equation 2.5} and \eqref{equation 2.6}. Since $\K(R_t/\J_H)=\det H$ we must have $\det B>0$. By Lemma \ref{replacing an element}, $B$ is a positive definite matrix since $\alpha_n\geq b$ and $H$ is a positive semidefinite matrix. Applying elementary column and row operations $\mathcal{C}_1-\mathcal{C}_n,\mathcal{R}_1-\mathcal{R}_n,\ldots,\mathcal{C}_{n-1}-\mathcal{C}_n,\mathcal{R}_{n-1}-\mathcal{R}_n$ on $B$ we see that the reduced matrix
  \begin{align*}
 \begin{bmatrix}
                    \alpha_1-b & \cdots & 0 & 0 & d_{1,1}-d_{n,1} & d_{1,2}-d_{n,2} & \cdots & d_{1,m}-d_{n,m} \\
                    \vdots & \ddots & \vdots & \vdots & \vdots & \vdots & \ddots & \vdots \\
                    0 & \cdots & \alpha_{n-1}-b & 0 & d_{n-1,1}-d_{n,1} & d_{n-1,2}-d_{n,2} & \cdots & d_{n-1,m}-d_{n,m} \\
                    0 & \cdots & 0 & b & d_{n,1} & d_{n,2} & \cdots & d_{n,m} \\
                    d_{1,1}-d_{n,1} & \cdots & d_{n-1,1}-d_{n,1} & d_{n,1} & \beta_1 & c_{1,2} & \cdots & c_{1,m} \\
                    d_{1,2}-d_{n,2} & \cdots & d_{n-1,2}-d_{n,2} & d_{n,2} & c_{1,2} & \beta_2 & \cdots & c_{2,m} \\
                    \vdots & \ddots & \vdots & \vdots & \vdots & \vdots & \ddots & \vdots \\
                    d_{1,m}-d_{n,m} & \cdots & d_{n-1,m}-d_{n,m} & d_{n,m} & c_{1,m} & c_{2,m} & \cdots & \beta_m
                   \end{bmatrix}
\end{align*}
is a positive definite matrix. Since $d_{r,1}\neq d_{s,1}$ for some $1\leq r<s\leq n$, we have $\det B<\det H_3$, by Fischer's inequality. The matrix $H_3$ is positive definite since $B$ is a positive definite matrix and $\alpha_i> b$ for $1\leq i<n$. By Theorem \ref{inequality for multigraph}, $\K(R_t/\J_{H_3})\geq\det H_3$. Thus $\K(R_t/\J_H)\geq\det H_3+\det H_4>\det B+\det H_4=\det H$, a contradiction, and this proves the theorem. \qed
\end{proof}

\vspace{4mm}
  In view of Theorem \ref{RC}, if $G$ is a simple graph on the vertex set $\{0,1,\ldots ,n\}$, obtained from a complete simple graph $K_{n+1}$ by deleting some edges through the root $0$, then $\K(R_n/\M_G^{(1)})=\det\widetilde Q_G$. Furthermore, by Lemma \ref{reducing to essential components}, in order to check for a graph $G$ when the equality $\K(R_n/\M_G^{(1)})=\det\widetilde Q_G$ holds we just need to check for its maximally connected subgraphs. Now as a consequence of Theorem \ref{required for simple graphs} we can characterize all simple graphs $G$ which satisfy the property $\K(R_n/\M_{G}^{(1)})=\det\widetilde Q_G$. More generally, we prove the following:
  
  \begin{theorem}\label{for simple graph having multiple rooted edges}
  Let $G$ be a subgraph of the complete multigraph $K_{n+1}^{a,1}$ on the vertex set $\{0,1,\ldots ,n\}$. The graph $G$ satisfies $\K(R_n/\M_G^{(1)})=\det\widetilde Q_G$ if and only if each maximally connected subgraph $G_i$ of $G$ with $|V(G_i)|=n_i$, is obtained from a complete multigraph $K_{n_i+1}^{a_i,1}$ by deleting some edges through the root $0$.
  \end{theorem}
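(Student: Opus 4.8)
The plan is to prove both implications after reducing, via Lemma \ref{reducing to essential components}, to the case of a single essentially connected component: that lemma gives $\K(R_n/\M_G^{(1)})=\det\widetilde Q_G$ for $G$ exactly when the same equality holds for each essentially connected component $G_i$. The backward implication is then immediate. If each $G_i$ arises from a complete multigraph $K_{n_i+1}^{a_i,1}$ by deleting some rooted edges, then Theorem \ref{RC} (applied with $b=1$) yields $\K(R_{n_i}/\M_{G_i}^{(1)})=\det\widetilde Q_{G_i}$ for every $i$, and Lemma \ref{reducing to essential components} reassembles these into the desired equality for $G$. This direction is routine.

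For the forward implication, assume that $\K(R_n/\M_G^{(1)})=\det\widetilde Q_G$ holds and, using the reduction, take $G$ essentially connected on $\{0,1,\ldots,t\}$; the case $t=1$ is trivial, so let $t\geq 2$. I would put $H=\widetilde Q_G$ into the normal form of Discussion \ref{discussion 2.9}, with a maximal clique $\{1,\ldots,n\}$ carrying $b$ parallel edges between each pair and $m=t-n$ remaining non-root vertices. The first key point is that $b=1$: since $G$ is a subgraph of $K_{n+1}^{a,1}$, every non-root edge has multiplicity at most $1$, while connectedness of $\widetilde G$ guarantees at least one such edge. The matrix $H$ is positive semidefinite, being a principal submatrix of the signless Laplacian $Q_G$, and it satisfies the conditions \eqref{equation 2.4a}--\eqref{equation 2.4f}; hence Theorem \ref{required for simple graphs} applies and forces $d_{r,l}=d_{s,l}$ for all $r,s\in[n]$ and all $l\in[m]$.

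The decisive step is to convert this numerical uniformity into a structural statement. Since $b=1$ and $0\leq d_{i,l}\leq b$, the maximality condition \eqref{equation 2.4f} produces, for each $l$, some index $i$ with $d_{i,l}=0$; uniformity then forces $d_{i,l}=0$ for every $i\in[n]$. Thus no edge joins the clique $\{1,\ldots,n\}$ to the remaining vertices $\{n+1,\ldots,t\}$, so if $m\geq 1$ the graph $\widetilde G$ splits into two nonempty vertex sets with no edges between them, contradicting its connectedness. Therefore $m=0$, meaning $\widetilde G$ is the complete simple graph on $[t]$, and $G$ is obtained from $K_{t+1}^{a,1}$ by deleting rooted edges, as required. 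I expect this final conversion to be the crux of the argument: it relies essentially on $b=1$, so that $d_{i,l}\in\{0,1\}$, which is exactly where the hypothesis $G\subseteq K_{n+1}^{a,1}$ enters. For genuine multigraphs with $b\geq 2$ the common value of the $d_{i,l}$ need not vanish and this step collapses, consistent with the theorem being stated only for subgraphs of $K_{n+1}^{a,1}$.
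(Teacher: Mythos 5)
Your proposal is correct and follows essentially the same route as the paper: reduce to essentially connected components via Lemma \ref{reducing to essential components}, handle the converse with Theorem \ref{RC} (equivalently Theorem \ref{rooted edge matrix version}), and for the forward direction apply the normal form of Discussion \ref{discussion 2.9} together with Theorem \ref{required for simple graphs}, using $b=1$ and condition \eqref{equation 2.4f} to force all $d_{i,l}=0$ and then essential connectedness to force $m=0$. The only cosmetic difference is that you spell out the step ``some $d_{i,l}\neq b$ plus uniformity implies all $d_{i,l}=0$'' which the paper leaves implicit in its description of the matrix.
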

  
  \begin{proof}
    First suppose that $G$ is a subgraph of the complete multigraph $K_{n+1}^{a,1}$ such that $G$ is maximally connected and $\K(R_n/\M_G^{(1)})=\det\widetilde Q_G$. We proceed in the same line as in Discussion \ref{discussion 2.9}. Suppose $\widetilde G$ is the induced subgraph $G_{\{1,\ldots ,n\}}$ of $G$. We find two vertices in $\widetilde G$ such that there is an edge between them and rename these two vertices as $1$ and $2$. Now find another vertex (if it exists) which has edges connecting both $1$ and $2$. Rename the new vertex as $3$ and continue this way to find a maximal clique on the vertex set say $\{1,2,\ldots ,r\}$ for $r\leq n$. The truncated signless Laplacian of $G$ will be of the form
    
    \begin{align*}%\label{graph to matrix}
 \widetilde Q_G=\begin{bmatrix}
                    \alpha_1 & \cdots & 1 & 1 & d_{1,1} & d_{1,2} & \cdots & d_{1,m} \\
                    \vdots & \ddots & \vdots & \vdots & \vdots & \vdots & \ddots & \vdots \\
                    1 & \cdots & \alpha_{r-1} & 1 & d_{r-1,1} & d_{r-1,2} & \cdots & d_{r-1,m} \\
                    1 & \cdots & 1 & \alpha_r & d_{r,1} & d_{r,2} & \cdots & d_{r,m} \\
                    d_{1,1} & \cdots & d_{r-1,1} & d_{r,1} & \beta_1 & c_{1,2} & \cdots & c_{1,m} \\
                    d_{1,2} & \cdots & d_{r-1,2} & d_{r,2} & c_{1,2} & \beta_2 & \cdots & c_{2,m} \\
                    \vdots & \ddots & \vdots & \vdots & \vdots & \vdots & \ddots & \vdots \\
                    d_{1,m} & \cdots & d_{r-1,m} & d_{r,m} & c_{1,m} & c_{2,m} & \cdots & \beta_m
                   \end{bmatrix}_{n\times n},
\end{align*}
  where $r+m=n$, and for each $1\leq j\leq m$ there exists some $i\in[r]$ such that $d_{i,j}=0$. Moreover, the entries of $\Q_G$ satisfies the equations \eqref{equation 2.4a} to \eqref{equation 2.4f}. Since $\K(R_n/\M_G^{(1)})=\det\widetilde Q_G$, we must have $d_{i,j}=0$ for all $i$ and $j$, by Theorem \ref{required for simple graphs}. Hence, we have $r=n$ since $G$ is maximally connected. Thus $G$ is obtained from a complete multigraph $K_{n+1}^{a,1}$ by deleting some edges through the root $0$.

  If $G$ is a subgraph of $K_{n+1}^{a,1}$ and $\K(R_n/\M_G^{(1)})=\det\widetilde Q_G$, then by Lemma \ref{reducing to essential components} and by the above discussion we see that each maximally connected subgraph $G_i$ of $G$ is obtained from a complete multigraph $K_{n_i+1}^{a_i,1}$ by deleting some edges through the root $0$. 
  
  The converse follows from Lemma \ref{reducing to essential components} and Theorem \ref{rooted edge matrix version}. \qed
  \end{proof}
  \vspace{4mm}
  
  Thus for simple graphs we have the following result:
  \begin{corollary}\label{simple graph result}
    Let $G$ be a simple graph  on $n+1$ vertices $\{0,1,\ldots,n\}$. For the graph $G$, $\K(R_n/\M_G^{(1)})=\det\widetilde Q_G$ holds if and only if each maximally connected subgraph $G_i$ of $G$ with $|V(G_i)|=n_i$, is obtained from a complete simple graph $K_{n_i+1}$ by deleting some edges through the root $0$.
  \end{corollary}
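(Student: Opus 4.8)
The plan is to deduce this corollary directly from Theorem \ref{for simple graph having multiple rooted edges} by specializing to the instance $a=1$. First I would observe that a simple graph $G$ on the vertex set $\{0,1,\ldots,n\}$ is precisely a subgraph of the complete simple graph $K_{n+1}=K_{n+1}^{1,1}$, and that $K_{n+1}^{1,1}$ is exactly the complete multigraph $K_{n+1}^{a,1}$ with $a=1$. Hence $G$ satisfies the hypotheses of Theorem \ref{for simple graph having multiple rooted edges}, and that theorem applies verbatim.

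Applying it, $\K(R_n/\M_G^{(1)})=\det\widetilde Q_G$ holds if and only if each essentially connected component $G_i$ with $|V(G_i)|=n_i$ is obtained from a complete multigraph $K_{n_i+1}^{a_i,1}$ by deleting some edges through the root $0$. The only step remaining is to identify the base multigraph $K_{n_i+1}^{a_i,1}$ with a complete \emph{simple} graph. Since $G$ is simple and each $G_i$ is an \emph{induced} subgraph of $G$, every edge of $G_i$ has multiplicity at most one; in particular the surviving rooted edges of $G_i$ are simple, and the induced subgraph $\widetilde G_i$ on the non-root vertices is a complete simple graph (this is exactly what forces the value $b=1$ in the clique-extraction step of Discussion \ref{discussion 2.9}). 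Consequently one may take $a_i=1$, so that $K_{n_i+1}^{a_i,1}=K_{n_i+1}$, and $G_i$ is a complete simple graph $K_{n_i+1}$ with some rooted edges removed. This is precisely the asserted condition.

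I expect no genuine obstacle here beyond bookkeeping: all the combinatorial and linear-algebraic content (the clique-extraction normal form of \eqref{graph to matrix}, Fischer's inequality via Theorem \ref{FI}, the reduction to essentially connected components in Lemma \ref{reducing to essential components}, and the forcing of $d_{r,l}=d_{s,l}$ in Theorem \ref{required for simple graphs}) has already been carried out at the more general level of multigraphs inside Theorem \ref{for simple graph having multiple rooted edges}. The single point one must verify carefully is that simplicity of $G$ descends to each essentially connected component and survives the clique reduction, so that the parameter $a_i$ collapses to $1$; this is immediate because the $G_i$ are induced subgraphs. With this in hand, both implications follow from Theorem \ref{for simple graph having multiple rooted edges}, the converse direction additionally invoking Lemma \ref{reducing to essential components} together with Theorem \ref{rooted edge matrix version}, exactly as in the multigraph setting.
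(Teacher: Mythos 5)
Your proposal is correct and takes essentially the same route as the paper, whose entire proof is ``Taking $a=1$ in Theorem \ref{for simple graph having multiple rooted edges} we get our result.'' Your additional bookkeeping---that simplicity of $G$ passes to the induced components $G_i$ and forces the parameter $a_i$ in the theorem's conclusion to collapse to $1$, so that $K_{n_i+1}^{a_i,1}$ may be replaced by the complete simple graph $K_{n_i+1}$---is a point the paper leaves implicit, and you verify it correctly.
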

  
  \begin{proof}
   Taking $a=1$ in Theorem \ref{for simple graph having multiple rooted edges} we get our result. \qed
  \end{proof}

 \section{Concluding remarks}\label{last section}

It would be interesting to characterize all multigraphs $G$ such that $\K(R/\1M)=\det\Q_G$. In this section we make some observations on the truncated signless Laplacians of such graphs. Moreover, based on some initial calculations we propose a conjecture on the structure of multigraphs $G$ such that the number of standard monomials of the $1$-skeleton ideal is same as the determinant of the truncated signless Laplacian of $G$.

Let $G$ be a multigraph on the vertex set $\{0,1,\ldots,t\}$. By Discussion \ref{discussion 2.9}, after a renumbering of the vertices the truncated signless Laplacian of $G$ is as in Equation (\ref{graph to matrix}), where the vertices $\{1,2,\ldots,n\}$ form a maximal clique in $G$. Then by Theorem \ref{required for simple graphs}, $d_{r,l}=d_{s,l}$ for $1\le r,s\le n$. Thus $\Q_G$ is of the form:

\begin{align*}\label{graph to matrix1}
 H:=\widetilde Q_G=\begin{bmatrix}
                    \alpha_1 & \cdots & b & b & d_{1} & d_{2} & \cdots & d_{m} \\
                    \vdots & \ddots & \vdots & \vdots & \vdots & \vdots & \ddots & \vdots \\
                    b & \cdots & \alpha_{n-1} & b & d_{1} & d_{2} & \cdots & d_{m} \\
                    b & \cdots & b & \alpha_n & d_{1} & d_{2} & \cdots & d_{m} \\
                    d_{1} & \cdots & d_{1} & d_{1} & \beta_1 & c_{1,2} & \cdots & c_{1,m} \\
                    d_{2} & \cdots & d_{2} & d_{2} & c_{1,2} & \beta_2 & \cdots & c_{2,m} \\
                    \vdots & \ddots & \vdots & \vdots & \vdots & \vdots & \ddots & \vdots \\
                    d_{m} & \cdots & d_{m} & d_{m} & c_{1,m} & c_{2,m} & \cdots & \beta_m
                   \end{bmatrix}_{t\times t},
\end{align*}
where $n+m=t$.
Among the vertices $\{n+1,\ldots,t\}$, we can again choose some vertices to form a maximal clique. Then as the following example suggests the number of edges joining these vertices and the vertices among $\{1,2,\ldots,n\}$ should be always same. For example, we take $n=2$ and $t=4$.
\begin{figure}[h!]
\tikzset{multi/.style={to path={
\pgfextra{% 
 \pgfmathsetmacro{\startf}{-(#1-1)/2}  
 \pgfmathsetmacro{\endf}{-\startf} 
 \pgfmathsetmacro{\stepf}{\startf+1}}
 \ifnum 1=#1 -- (\tikztotarget)  \else
\foreach \i in {\startf,\stepf,...,\endf}
    {%
     (\tikztostart)        parabola[bend pos=0.5] bend +(0,0.2*\i)  (\tikztotarget)
      }
      \fi   
     \tikztonodes
      }}}
      
      \tikzset{multi1/.style={to path={
\pgfextra{% 
 \pgfmathsetmacro{\startf}{-(#1-1)/2}  
 \pgfmathsetmacro{\endf}{-\startf} 
 \pgfmathsetmacro{\stepf}{\startf+1}}
 \ifnum 1=#1 -- (\tikztotarget)  \else
     let \p{mid}=($(\tikztostart)!0.5!(\tikztotarget)$) 
         in
\foreach \i in {\startf,\stepf,...,\endf}
    {%
     (\tikztostart) .. controls ($ (\p{mid})!\i*6pt!90:(\tikztotarget) $) .. (\tikztotarget)
      }
      \fi   
     \tikztonodes
}}}   

\begin{tikzpicture}
[scale=.55]
      \centering
\draw [fill] (5,2) circle [radius=0.1];
\draw [fill] (6.5,-1.8) circle [radius=0.1];
\draw [fill] (8,2) circle [radius=0.1];
\draw [fill] (5,0) circle [radius=0.1];
\draw [fill] (8,0) circle [radius=0.1];
\node at (4.5,2) {$1$};
\node at (8.5,2) {$2$};
\node at (4.5,0) {$3$};
\node at (8.5,0) {$4$};
\draw (5,2) edge[multi=2] (8,2);
\draw (5,0) edge[multi1=1] (8,2);
\draw (5,0) edge[multi=2] (8,0);
\draw (5,2)--(5,0);
\draw (8,0)--(6.5,-1.8)--(5,2);
\draw (8,2)--(8,0)--(5,2);
\node at (6.5,-2.5) {$0$};
\node at (6.5,-3.9) {$G$};

\draw [fill] (14,2) circle [radius=0.1];
\draw [fill] (15.5,-1.8) circle [radius=0.1];
\draw [fill] (17,2) circle [radius=0.1];
\draw [fill] (14,0) circle [radius=0.1];
\draw [fill] (17,0) circle [radius=0.1];
\node at (13.5,2) {$1$};
\node at (17.5,2) {$2$};
\node at (13.5,0) {$3$};
\node at (17.5,0) {$4$};
\draw (14,2) edge[multi=2] (17,2);
\draw (14,0) edge[multi1=2] (14,2);
\draw (14,0) edge[multi1=2] (17,2);
\draw (14,0) edge[multi=2] (17,0);
\draw (17,2)--(17,0)--(14,2);
\draw (17,0)--(15.5,-1.8)--(14,2);
\node at (15.5,-2.5) {$0$};
\node at (15.5,-3.9) {$H$};
\end{tikzpicture}\caption{}\label{figure 13}
\end{figure}
For the graphs $G$ and $H$ in Figure \ref{figure 13}, the vertices $\{1,2\}$ form a maximal clique among the non-root vertices and $\{3,4\}$ form a maximal clique among the remaining non-root vertices. Note that for the graph $G$ each vertices $3$ and $4$ are connected to the vertices $1$ and $2$ with the same number of edges. But this is not the case for $H$. A calculation with Macaulay2 \cite{GS} shows that $\K(R/\1M)=\det\Q_G=231$ but $\K(R/\M_H^{(1)})=564>550=\det\Q_H$. The graph $G$ can be written as $G'*_1G''$, where $G'$ and $G''$ are the induced subgraphs on the vertex sets $\{0,1,2\}$ and $\{0,3,4\}$, respectively. However, if we take $G_1=G'*_3G''$, then $\K(R/\M_{G_1}^{(1)}=3180>3103=\det\Q_{G_1}$. Here both $G'$ and $G''$ are obtained from the complete muligraph $K_3^{1,2}$ by deleting some rooted edges.

\begin{figure}[h!]
\tikzset{multi/.style={to path={
\pgfextra{% 
 \pgfmathsetmacro{\startf}{-(#1-1)/2}  
 \pgfmathsetmacro{\endf}{-\startf} 
 \pgfmathsetmacro{\stepf}{\startf+1}}
 \ifnum 1=#1 -- (\tikztotarget)  \else
\foreach \i in {\startf,\stepf,...,\endf}
    {%
     (\tikztostart)        parabola[bend pos=0.5] bend +(0,0.2*\i)  (\tikztotarget)
      }
      \fi   
     \tikztonodes
      }}}
      
      \tikzset{multi1/.style={to path={
\pgfextra{% 
 \pgfmathsetmacro{\startf}{-(#1-1)/2}  
 \pgfmathsetmacro{\endf}{-\startf} 
 \pgfmathsetmacro{\stepf}{\startf+1}}
 \ifnum 1=#1 -- (\tikztotarget)  \else
     let \p{mid}=($(\tikztostart)!0.5!(\tikztotarget)$) 
         in
\foreach \i in {\startf,\stepf,...,\endf}
    {%
     (\tikztostart) .. controls ($ (\p{mid})!\i*6pt!90:(\tikztotarget) $) .. (\tikztotarget)
      }
      \fi   
     \tikztonodes
}}}   

\begin{tikzpicture}
[scale=.55]
      \centering
\draw [fill] (0,0) circle [radius=0.1];
\draw [fill] (-3,2) circle [radius=0.1];
\draw [fill] (3,2) circle [radius=0.1];
\draw [fill] (3,4) circle [radius=0.1];
\draw [fill] (1,6) circle [radius=0.1];
\draw [fill] (-1,6) circle [radius=0.1];
\draw [fill] (-3,4) circle [radius=0.1];
\node at (0,-0.8) {$0$};
\node at (-3.8,2) {$1$};
\node at (-3.8,4) {$6$};
\node at (-1.8,6.2) {$5$};
\node at (1.8,6.2) {$4$};
\node at (3.8,4) {$3$};
\node at (3.8,2) {$2$};
\node at (2,0.3) {$H_1$};
\draw (0,0) edge[multi1=1] (-3,2);
\draw (-3,2) edge[multi1=2] (-3,4);
\draw (-3,2) edge[multi1=2] (-1,6);
\draw (-3,2) edge[multi1=2] (3,2);
\draw (3,2) edge[multi1=2] (-1,6);
\draw (3,4) edge[multi1=2] (1,6);
\draw (-3,4) edge[multi1=2] (3,2);
\draw (0,0)--(1,6);
\draw (-1,6)--(-3,4)--(3,4)--(-3,2)--(1,6)--(-3,4);
\draw (1,6)--(3,2)--(3,4)--(-1,6)--(1,6);
%\node at (6.5,-2.5) {$0$};

\draw [fill] (13,0) circle [radius=0.1];
\draw [fill] (10,2) circle [radius=0.1];
\draw [fill] (16,2) circle [radius=0.1];
\draw [fill] (16,4) circle [radius=0.1];
\draw [fill] (14,6) circle [radius=0.1];
\draw [fill] (12,6) circle [radius=0.1];
\draw [fill] (10,4) circle [radius=0.1];
\node at (13,-0.8) {$0$};
\node at (9.2,2) {$1$};
\node at (9.2,4) {$6$};
\node at (11.2,6.2) {$5$};
\node at (14.8,6.2) {$4$};
\node at (16.8,4) {$3$};
\node at (16.8,2) {$2$};
\node at (10.8,0.4) {$G_1$};
\draw (13,0) edge[multi1=1] (10,2);
\draw (10,2) edge[multi1=1] (10,4);
\draw (10,2) edge[multi1=1] (12,6);
\draw (10,2) edge[multi1=2] (16,2);
\draw (16,2) edge[multi1=1] (12,6);
\draw (16,4) edge[multi1=2] (14,6);
\draw (10,4) edge[multi1=1] (16,2);
\draw (13,0)--(14,6);
\draw (12,6)--(10,4)--(16,4)--(10,2)--(14,6)--(10,4);
\draw (14,6)--(16,2)--(16,4)--(12,6)--(14,6);

\end{tikzpicture}\caption{}\label{figure 14}
\end{figure}

 Now consider the graphs $G_1$ and $H_1$ in Figure \ref{figure 14}. Notice that the induced subgraphs of $G_1$ and $H_1$ on the vertex set $\{0,1,2,3,4\}$ are in both cases $G$. For the graph $G_1$ the vertices $5$ and $6$ are connected to the remaining non-root vertices with the same number of edges. But this is not the case for the graph $H_1$. Using Macaulay2 \cite{GS} we see that $\K(R/\M_{G_1}^{(1)})=\det\Q_{G_1}=28536$ but $\K(R/\M_{H_1}^{(1)})=93847>92976=\det\Q_{H_1}$. The graph $G_1$ can be written as $G*_1G'''$, where $G'''$ is the induced subgraph on the vertex set $\{0,5,6\}$. Moreover, $G'''$ is obtained from the complete multigraph $K_3^{1,1}$ by deleting some rooted edges.

In general, we propose the following conjecture on the structure of a multigraph $G$ satisfying $\K(R/\1M)=\det\Q_G$.

\begin{conjecture}\label{conjecture}
Let $G$ be a multigraph on the vertex set $\{0,1,\ldots,n\}$. Then $G$ satisfies the equality $\K(R_n/\1M)=\det\Q_G$ if and only if we can write $G=((\cdots(G_1*_{d_1}G_2)*_{d_2}\cdots *_{d_{r-2}}G_{r-1})*_{d_{r-1}}G_r)$  after a possible renumbering of the vertices. Here $G_i$ is a multigraph on the vertex set $\{0,1,\ldots,n_i\}$ obtained from the complete multigraph $K_{n_i+1}^{a_i,b_i}$ by removing edges through the root $0$ and $n=\sum_{i=1}^rn_i$. Moreover, $d_1\ge d_2\ge \cdots\ge d_{r-1}$ are nonnegative integers with $b_1\ge d_1$ and $b_i\ge d_{i-1}$ for $2\le i\le r$.
\end{conjecture}

The `if part' of the above conjecture holds true because of Corollary \ref{new graphs}. To prove the `only if' part, one possible way is to proceed as in Discussion \ref{discussion 2.9} by constructing successive maximal cliques on the set of non-root vertices and derive some results on the number of edges connecting these vertices.

\vspace{4mm}
  \noindent
  {\bf Acknowledgements:} I am grateful to Prof. Chanchal Kumar for introducing me to these problems and for his support. Many thanks to Abhay Soman and Sushil Bhunia for some helpful discussions and their suggestions. I wish to thank an anonymous referee for some helpful suggestions on an earlier version of this paper. The financial support is partially provided by CSIR, Govt. of India and DAE, Govt. of India.

%\bibliographystyle{plain}
%\bibliography{biblio}

\end{document}